\title[Second-order Kuramoto oscillators]
{On the region of attraction of phase-locked states for swing equations  on connected graphs with  inhomogeneous dampings}
\author[Choi]{Young-Pil Choi}
\address[Young-Pil Choi]{\newline Fakult\"at f\"ur Mathematik
    \newline  Technische Universit\"at M\"unchen, Boltzmannstra{\ss}e 3, 85748, Garching bei M\"unchen, Germany}
\email{ychoi@ma.tum.de}
\author[Li]{Zhuchun Li}
\address[Zhuchun Li]{\newline Department of Mathematics \newline  Harbin Institute
of Technology, Harbin 150001, China
}
\email{lizhuchun@hit.edu.cn}
\newtheorem{theorem}{Theorem}[section]
\newtheorem{lemma}{Lemma}[section]
\newtheorem{proposition}{Proposition}[section]
\newtheorem{remark}{Remark}[section]
\newtheorem{definition}{Definition}[section]
\newcommand{\bbr}{\mathbb R}
\newcommand{\lal}{\langle}
\newcommand{\ral}{\rangle}
\newcommand{\e}{\varepsilon}
\newcommand{\lt}{\left}
\newcommand{\rt}{\right}
\newcommand{\me}{\mathcal{E}}
\newcommand{\md}{\mathcal{D}}
\newcommand{\bq}{\begin{equation}}
\newcommand{\eq}{\end{equation}}
\newcommand{\wt}{\widetilde}
\def\charf {\mbox{{\text 1}\kern-.30em {\text l}}}
\begin{document}

\date{\today}

\subjclass{34C15, 34D06, 92D25} \keywords{Synchronization, region of attraction,  transient stability, second-order Kuramoto oscillators,  lossless power grids,   connected network, inhomogeneous dampings}


\begin{abstract}
We consider the synchronization problem of swing equations, a second-order Kuramoto-type model, on   {\em connected networks} with {\em inhomogeneous dampings}. This was largely motivated by its relevance to the dynamics of  power grids.    We   focus on the   estimate of  the region of attraction of  synchronous states which is  a central problem in the transient stability of power grids.  In the  recent
literature, D\"{o}rfler, Chertkov, and Bullo [{\em Proc. Natl. Acad. Sci. USA}, 110 (2013), pp. 2005-2010]
found a condition for the synchronization in smart grids. They pointed out that the region of attraction  is an important unsolved problem. In  [{\em SIAM J. Control Optim.}, 52 (2014), pp. 2482-2511],  only a special case was considered where the oscillators have homogeneous dampings and the underlying graph has a diameter less than or equal to 2. There the  analysis heavily relies on these   assumptions; however, they are too strict compared to the real power networks. In this paper, we continue the study and  derive an estimate on the region of attraction of phase-locked states for  lossless  power grids on connected graphs with inhomogeneous  dampings. Our main strategy is based on the gradient-like formulation and energy estimate. We refine the assumptions by constructing a new energy functional which enables us to consider such   general settings.
\end{abstract}
\maketitle \centerline{\date}

\section{Introduction}
\textbf{General background.-}
The synchronization  of large populations of weakly coupled oscillators is very common in nature, and it has been extensively studied in various scientific communities such as physics, biology,  sociology, etc. The scientific interest in the synchronization of coupled oscillators can be traced back to Christiaan Huygens' report on coupled pendulum clocks \cite{H}.  However, its rigorous mathematical treatment was done by Winfree \cite{W} and Kuramoto \cite{K} only several decades ago. Since then, the Kuramoto model became a   paradigm for synchronization  and various extensions have been extensively explored in scientific communities such as applied mathematics \cite{C-H-J-K,C-H-Y,C-L-H-X-Y}, engineering and control theory \cite{C-S, D-B-1, D-B-0, D-B-2}, physics \cite{A-B,  P-R-K, S}, neuroscience and biology \cite{E, K}.

In the present work, we consider the synchronization of a variant of Kuramoto model which has relevant significance in engineering, in particular, the  power grids with general network topology and inhomogeneous dampings.
  The power grid,  as a complex large-scale system, has  rich nonlinear dynamics, and its synchronization and transient stability are very important   in real applications.
The    transient stability, roughly speaking, is
concerned with the ability of a power network  to settle into an acceptable
steady-state operating condition following a large
disturbance.  In recent years, renewable energy has fascinated  not only the scientific community but also the industry. It is believed that  the future power generations  will
rely increasingly on renewables such as wind and solar power, and the industry of renewable power has been in growth. These renewable
power sources are highly stochastic; thus,   an increasing number of transient
disturbances  will act on   increasingly complex power grids. As a consequence, it becomes  significantly important    to study  complex power grids and their transient stability.


\textbf{Literature review.-} The similarity between the  power grids and  nonuniform  second-order (inertial)  Kuramoto oscillators  \begin{equation*}\label{eq1}m_i\ddot{\theta}_i+d_i\dot{\theta}_{i} = \Omega_{i} + \sum_{j=1}^{N} a_{ij} \sin(\theta_{j} - \theta_{i})\end{equation*}
has been reported and explored  in  many literature such as   \cite{D-B-2, F-N-P,F-R-C-M-R,S-U-S-P}. If we  take $m_i=0$, $d_i=1$ and $a_{ij}=K/N$, then it reduces to the classic Kuramoto model   with mean-field coupling strength $K$.  The synchronization of the classic model has been studied in many literature, such as \cite{C-H-J-K, C-S, J-M-B, L-X-Y,   V-M,V-M2}.
   This problem is to look
for  conditions on the parameters and/or  initial phase configurations   leading to the existence  or emergence of  phase-locked states.
 The inertial effect   was first conceived
by Ermentrout \cite{E} to explain the slow synchronization of certain biological systems.  Mathematically, incorporating
the inertial effect into Kuramoto  model  is simply adding the second-order term,  resulting in a model with $m_i=m, d_i=1, a_{ij}=K/N$,
which  causes richer phenomena from the dynamical viewpoint.   For   mathematical results on the  inertial model we refer to \cite{C-H-Y,C-L-H-X-Y,D-B-1,L-X-Y1}.  A connection between first and second-order models is the topological  conjugacy   argument  in \cite{D-B}.


The power networks with synchronous motors can be described by swing equations, a system of nonuniform second-order Kuramoto oscillators, see Subsection \ref{subsecmodels}.   The transient stability, in terms of power grids,   is concerned
with the system's ability to reach an acceptable  synchronism after
a major disturbance such as short circuit caused by
lightning. 
 Then the fundamental problem, as pointed in the survey \cite{V-W-C},  is: {\em whether the post-fault  state (when the disturbance is cleared) is located in  the region of attraction  of   synchronous states}. Thus, a closely related issue is to estimate the region of attraction  of   synchronous states.    In the recent paper \cite{D-C-B},     the authors focused on the network topology, but as the authors mentioned, {\it ``another important question
not addressed in the present article concerns the region
of attraction of a synchronized solution''}.  Therefore, the region of attraction  of synchronized states is indeed a central problem for the transient stability.

 For the power grid,
some analysis on transient stability  can be found in \cite{C, C-C-C, V-W-C}, where the approach is the so-called direct method based on the   energy function.   However, this method did not provide explicit formulas to check if the power
system synchronizes for   given initial data and parameters.  Actually, the    energy function, containing a  pair-wise nonlinear attraction with terms $\cos(\theta_i-\theta_j)$,  is difficult to study.  Another tool is based on the  singular perturbation theory \cite{C-W-V, D-B-1} by which the second-order dynamics can be approximated by the first-order
dynamics when the system is sufficiently strongly over-damped, i.e.,  the ratio of inertia over damping  is sufficiently
small.   
For example, in \cite{D-B-1}  the authors studied the more sophisticated power networks  with energy losses (phase shifts)  and  derived algebraic conditions
that relate the synchronization   to the underlying network structure.
  Unfortunately there is no formula  in \cite{C-W-V,D-B-1}    to check whether a given system is so strongly over-damped that the result can be applied.   In the  survey paper \cite{D-B-0}, D\"{o}rfler and Bullo pointed out that the transient
dynamics of second-order oscillator networks is  a  challenging open problem.

As far as the authors know, the direct analysis on the region of attraction for second-order Kuramoto oscillators could be found only in \cite{C-H-Y, C-L-H-X-Y, L-X-Y1}. However, in terms of power grids,  there are drawbacks in at least two aspects. First, in these studies the inertia and damping are assumed to be either uniform \cite{C-H-Y, C-L-H-X-Y}  or homogeneous \cite{L-X-Y1}, which is not realistic in   power generators.  
   The second one lies in the network topology. For example,  in \cite{L-X-Y1}  the transient stability was considered  when the underlying graph  have a diameter less than or equal to 2.  In  \cite{D-B-1},  the  underlying network has to be even all-to-all (see \cite[Theorem 2.1]{D-B-1}).   In practice, it is not realistic to assume  that a power network should have such a nice connectivity,    
   for example,   the Northern European power grid   \cite{M-H-K-S}. 
   Thus, the real situation challenges us to consider   the general systems with inhomogeneous  dampings and general networks.  

\textbf{Contributions.-}
The main contribution of this paper is to  {\em estimate the region of attraction of synchronous states for lossless  power grids on general  networks with inhomogeneous dampings}.  To the best of the authors' knowledge, this is the first rigorous study on this challenging problem for such a general model of lossless power grids with oscillators.  We use a direct analysis on
the dynamics of second-order Kuramoto-type model and derive an {\em explicit} formula to
estimate the   region of attraction.

{Among the rigorous  analysis of Kuramoto oscillators, a typical  method is to study the dynamics of phase difference, for example, \cite{C-H-J-K, C-H-Y, C-L-H-X-Y, C-S, D-B, L-X-Y1}. However, such an analysis crucially  relies on the homogeneousness of parameters and the nice connectivity that the diameter of the  graph should be less than or equal to 2. Thus, this method fails for the current case. Our strategy is to use the gradient-like formulation  and  energy method.   Departing from the (physical) energy   for the so-called direct method in \cite{C, C-C-C, V-W-C}, we will construct a virtual energy function which enables us to derive the boundedness of the trajectory.  Then we can use the {\L}ojasiewicz's theory   to derive the convergence immediately. We also remark that our virtual  energy    is  different  with that in \cite{C-L-H-X-Y} where the uniform inertia and damping were considered.}




\textbf{Organization of paper.-}  In Section 2, we present the models, main result and some discussions.     In Section 3, we give a proof to the main result.  In Section 4, we present some numeric illustrations. Finally, Section 5 is devoted to a conclusion. 


{\textbf{Notation:}}

 \noindent $\|\cdot\|$---Euclidean norm in $\mathbb R^N$,



\noindent $L^\infty(\mathbb R^+,\mathbb R^N)=\left\{f:\mathbb R^+\rightarrow \mathbb R^N\mid f \,\,\mbox{is bounded}\right\},$   

\noindent $W^{1,\infty}(\mathbb R^+,\mathbb R^N)=\left\{f:\mathbb R^+\rightarrow \mathbb R^N\mid f \,\,\mbox{is differentiable}, f, f'\in  L^\infty(\mathbb R^+,\mathbb R^N)\right\}.$


\section{Models, main result and discussions}\label{preliminaries}
\setcounter{equation}{0}
In this section, we  present the model of power grids as a second-order Kuramoto-type model, and  its gradient-like flow formulation together with   a key convergence result for the  general gradient-like
system with analytic potentials. Some preliminary   inequalities  are also provided.

\subsection{Models}\label{subsecmodels}
A mathematical model for a {\em lossless}   network-reduced power system \cite{C-C-C,D-C-B}  can be defined by the following swing equations:
 \begin{equation} \label{grid}
m_i\ddot{\theta}_i+d_i\dot{\theta}_{i} = P_{m,i} + \sum_{j=1}^{N} |V_i|\cdot|V_j|\cdot\Im(Y_{ij}) \sin(\theta_{j} - \theta_{i}), \quad i=1,2,\cdots,N, \quad t > 0.
\end{equation}
 Here $\theta_i $ and $\dot{\theta}_{i}$ are the rotor
 angle and frequency of the $i$-th generator, respectively. The parameters $P_{m,i}>0$, $|V_i|>0$, $m_i>0$, and $d_i>0$ are the effective power input, voltage level, generator
inertia constant, and damping coefficient of the $i$-th generator, respectively. For $Y=(Y_{ij})$ we denote the  symmetric
nodal admittance matrix, and $\Im(Y_{ij})$ represents
the susceptance of the transmission line between $i$ and $j$. If the power network is subject to energy loss   due to the transfer conductance, then it should be depicted by a phase shift in each coupling term \cite{D-B-1}.   We   refer to \cite{D-B-1,D-C-B,S-P}  for more details or the
 derivation of \eqref{grid} from physical principles.
For simplicity in mathematical sense,  let us take $\Omega_{i}=P_{m,i}$ and $a_{ij}=|V_i|\cdot|V_j|\cdot\Im(Y_{ij})$, and drop the hats in \eqref{grid}. Then the system \eqref{grid} becomes   a second-order   model of coupled oscillators 
\begin{align}\begin{aligned}\label{Ku-iner-net}
m_i\ddot{\theta}_i+d_i\dot{\theta}_{i}& = \Omega_{i} + \sum_{j=1}^{N} a_{ij} \sin(\theta_{j} - \theta_{i}),\quad i=1,2,\dots,N. 
\end{aligned}\end{align}
Here, the coupling between oscillators is symmetric since $Y$ is a symmetric matrix.
If $m_i/d_i=m_j/d_j$ for all $i\neq j$, it is said to be a model with {\em homogeneous} dampings.
 We can define a graph $\mathcal G=(\mathcal V, \mathcal W)$ associated to the system \eqref{Ku-iner-net}  such that   $\mathcal V=\{1,2,\dots, N\},$ and  $\mathcal W=\left\{(i,j): a_{ij}>0\right\}.$  In this setting,  we call $\mathcal G$  the  {undirected}  graph  induced by the  matrix $A=(a_{ij})$.

  {We acknowledge that a real power network should contain both generators and loads, while  the above  model includes only generators.  In power flow, loads can be modeled by different ways, for example, a system of first-order Kuramoto oscillators \cite{D-C-B} or algebraic
equations.  Another typical way is to use the Kron reduction to    obtain so-called ``network-reduced''  model  so that the loads  are involved in the  transfer admittance \cite{D-B-2, Ward}, and the resulted system consists of only generators.  In such a sense, the network-reduced model \eqref{grid} becomes an often studied mathematical model for power grids.  It is worthwhile to mention that the Northern European power grid  in \cite{M-H-K-S} does not have the nice connectivity in literature \cite{D-B-1,L-X-Y1}  after the so-called Kron reduction \cite{D-B-2}  (this can be seen by looking into the  power flow chart in \cite[Fig.4]{M-H-K-S} together with the topological properties of Kron reduction in \cite[Theorem III.4]{D-B-2}). }  

Next, we recall some definitions for complete synchronization of coupled oscillators.

\begin{definition}
Let $\theta(t)=(\theta_1(t), \dots, \theta_N(t))$ be an ensemble of phases of Kuramoto oscillators.
\begin{enumerate}
\item
The Kuramoto ensemble asymptotically exhibits complete   frequency  synchronization if and only if
\[ \displaystyle \lim_{t \to \infty} |\omega_i(t) - \omega_j(t)| = 0,  
\quad \forall\; i \not = j. \]
Here, $\omega_i(t):=\dot\theta_i(t)$ is the frequency of $i$th oscillator at time $t$.

\item
The Kuramoto ensemble asymptotically exhibits phase-locked state if and only if the relative phase differences converge to some constant asymptotically:
\[ \displaystyle \lim_{t \to \infty} ({\theta}_i(t) - {\theta}_j(t) )= \theta_{ij}, \qquad \forall\; i \not = j. \]
\end{enumerate}
\end{definition}
\subsection{A macro-micro decomposition}\label{s3.1}
We notice that the system \eqref{Ku-iner-net} can be rewritten as a system of first-order ODEs:
\begin{align*}
\begin{aligned}
{\dot \theta}_i &= \omega_i, \quad i=1, 2, \dots, N, \quad t > 0, \\
{\dot \omega}_i &= \frac{1}{m_i} \left[ -d_i\omega_i +   \Omega_i +  \sum_{j=1}^{N}
a_{ij}\sin(\theta_j - \theta_i) \right].
\end{aligned}
\end{align*}
Let $\theta:=(\theta_1, \theta_2, \dots, \theta_N)$, \,$\omega:=(\omega_1,\omega_2, \dots,\omega_N)$, \,$M:=diag\{m_1, m_2, \dots, m_N\}$, $D:=diag\{d_1, d_2, \dots, d_N\}$,  and  $ \Omega:=( \Omega_1,  \Omega_2, \dots,  \Omega_N)$. Using these newly defined notations, we  introduce macro variables as follows:
\begin{equation}\label{vs}
\Omega_c:= \frac{\sum_{i=1}^N\Omega_i}{tr(D)} = \frac{\sum_{i=1}^N\Omega_i}{\sum_{i=1}^N d_i}, \quad \theta_s:=   \sum_{i=1}^Nd_i\theta_i, \quad \omega_s:=  \sum_{i=1}^Nm_i\omega_i,
\end{equation}
where $tr(\cdot)$ denotes the trace of a matrix. We also set the phase fluctuations   (micro-variables) as $$\hat\theta_i:=\theta_i- \Omega_c \,t, \quad\,i=1,2,\dots,N,$$  then we get $\ddot{\hat\theta}_i=\ddot{\theta}_i$, $\dot{\hat\theta}_i=\dot{\theta}_i-\Omega_c\,$, and the system \eqref{Ku-iner-net} can be rewritten as
 \begin{equation} \label{Ku-iner-net1}
m_i\ddot{\hat\theta}_i+d_i\dot{\hat\theta}_{i} = \hat\Omega_i + \sum_{j=1}^{N} a_{ij} \sin(\hat\theta_{j} - \hat\theta_{i}) \quad \mbox{with} \quad \hat\Omega_i := \Omega_{i}-d_i \Omega_c,
\end{equation}
where   the ``micro'' natural frequencies $\hat\Omega_i$ sum to zero:
\begin{equation*}
\sum_{i=1}^N\hat \Omega_i=0.
\end{equation*}
In particular, if $\Omega_i/d_i= \Omega_j/d_j$ for all $i,j=1,2,\dots,N$, then we have $\hat\Omega_i=0$ for each $i$ and the equation \eqref{Ku-iner-net1} reduces to a system of coupled  oscillators with identical natural frequencies:
\begin{equation*}\label{Ku-iner-homo} m_i\ddot{\hat\theta}_i+d_i\dot{\hat\theta}_{i} = \sum_{j=1}^{N} a_{ij} \sin(\hat\theta_{j} - \hat\theta_{i}).
\end{equation*}
 {Note that the   ensemble of micro-variables $(\hat\theta_1, \dots, \hat\theta_N)$ is a phase shift of the original ensemble $(\theta_1, \dots, \theta_N)$, thus, they share the same asymptotic property  as long as we concern only the  synchronization or phase-locking  behavior.  Moreover, the equations for the variable $\theta_i$ and $\hat\theta_i$, i.e., \eqref{Ku-iner-net} and \eqref{Ku-iner-net1}, have the same form. So, we may consider \eqref{Ku-iner-net1} instead of \eqref{Ku-iner-net} when we concern the synchronization problem.
 These observations enable us to assume,  without loss of generality,   the natural frequencies in \eqref{Ku-iner-net} satisfy
\begin{equation}\label{eqsum0}
\sum_{i=1}^N\Omega_i=0.
\end{equation}
In the rest of this paper, we consider the system \eqref{Ku-iner-net} with \eqref{eqsum0}.}



\subsection{An   inequality on connected graphs}
Consider a symmetric and connected network, which can be realized with a weighted graph $\mathcal G = (\mathcal V, \mathcal W, A)$. Here, $\mathcal V = \{1, 2, \dots, N\}$ and $\mathcal W \subseteq \mathcal V\times \mathcal V$ are vertex and edge sets, respectively, and $A = (a_{ij})$ is an $N \times N$ matrix whose element $a_{ij}$ denotes the capacity of the edge (communication weight) flowing from $j$ to $i$. We note that the underlying network  of  power grids \eqref{Ku-iner-net} is  {\em undirected},  i.e., the adjacency matrix $A=\{a_{ij}\}$ is  symmetric.  We say the graph $\mathcal G$ is connected  if for any pair of nodes  $i,j\in \mathcal V$, there exists a shortest  path from $i$ to $j$, say
\[i=p_1\to p_2\to p_3\to \cdots \to p_{ _{d_{ij}}}=j, \quad (p_k, p_{k+1})\in \mathcal W, \quad k=1,2,\dots,d_{ij}-1.\]  In order  for  the complete synchronization of \eqref{Ku-iner-net}, in this paper we   assume that the induced undirected graph $\mathcal G$ is {\em connected}. The following result, which connects the total deviations and the partial deviations along the edges in a connected graph, will be useful in the energy estimate. For its proof, we refer to \cite{C-L-H-X-Y}.
\begin{lemma}\label{Equivalence.comm}
Suppose that the graph $\mathcal G= (\mathcal V, \mathcal W, A)$ is connected and let $\theta_i$ be the phase of the Kuramoto oscillator located at the vertex $i$. Then, there there exists a positive constant $L_*$ such that
\[ \displaystyle L_* \sum_{l,k = 1}^{N} |\theta_l - \theta_k|^2 \leq  \sum_{(l, k) \in \mathcal W}
|\theta_l - \theta_k|^2 \leq \sum_{l,k =1}^{N}
|\theta_l - \theta_k|^2, \] where the positive constant $L_*$ is given by
\begin{equation}\label{L}
L_* := \frac{1}{1+d(\mathcal G)|\mathcal W^c|} \quad \mbox{with} \quad d(\mathcal G) := \max_{1\leq i,j \leq N}d_{ij}.
\end{equation}
Here $\mathcal W^c$ is the complement of edge set $\mathcal W$ in $\mathcal V\times \mathcal V$ and $|\mathcal W^c|$ denotes its cardinality.
\end{lemma}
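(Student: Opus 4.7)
The plan is to prove the two inequalities separately. The upper bound $\sum_{(l,k)\in \mathcal W}|\theta_l-\theta_k|^2\le \sum_{l,k=1}^N|\theta_l-\theta_k|^2$ is immediate since $\mathcal W\subseteq \mathcal V\times\mathcal V$ and every summand is nonnegative, so I would dispose of it in one line. The substance of the lemma is the lower bound, which quantifies how connectivity lets us dominate arbitrary pairwise differences by edge-differences.

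For the lower bound, the idea is a standard ``path argument'' using the hypothesis that $\mathcal G$ is connected. Fix any pair $(l,k)\in \mathcal W^c$. By connectedness there is a shortest path $l=p_1\to p_2\to\cdots\to p_{d_{lk}}=k$ entirely in $\mathcal W$ with length $d_{lk}\le d(\mathcal G)$. Writing
\[
\theta_l-\theta_k=\sum_{s=1}^{d_{lk}-1}(\theta_{p_s}-\theta_{p_{s+1}})
\]
and applying the Cauchy--Schwarz inequality yields
\[
|\theta_l-\theta_k|^2\le d_{lk}\sum_{s=1}^{d_{lk}-1}|\theta_{p_s}-\theta_{p_{s+1}}|^2\le d(\mathcal G)\sum_{(i,j)\in\mathcal W}|\theta_i-\theta_j|^2,
\]
where in the last step I discard positive terms and use that each edge on the path lies in $\mathcal W$.

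Next I would sum this uniform bound over all pairs in $\mathcal W^c$, which contributes a factor $|\mathcal W^c|$, giving
\[
\sum_{(l,k)\in\mathcal W^c}|\theta_l-\theta_k|^2\le d(\mathcal G)\,|\mathcal W^c|\sum_{(i,j)\in\mathcal W}|\theta_i-\theta_j|^2.
\]
Adding $\sum_{(l,k)\in\mathcal W}|\theta_l-\theta_k|^2$ to both sides and recognizing the left-hand side as the full sum over $\mathcal V\times\mathcal V$ gives
\[
\sum_{l,k=1}^N|\theta_l-\theta_k|^2\le \bigl(1+d(\mathcal G)\,|\mathcal W^c|\bigr)\sum_{(i,j)\in\mathcal W}|\theta_i-\theta_j|^2,
\]
which after dividing by $1+d(\mathcal G)|\mathcal W^c|$ produces exactly the constant $L_*$ defined in \eqref{L}.

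Essentially none of the steps are obstacles: the only place requiring a little care is the bookkeeping of the upper bound on the path length (using $d_{lk}\le d(\mathcal G)$ uniformly) and the fact that the crude estimate $|\mathcal W^c|$ on the number of non-edge pairs is what produces the stated constant. I would not try to sharpen the constant here since the lemma is a qualitative tool for the later energy estimate, where any positive $L_*$ suffices.
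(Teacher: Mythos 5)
Your proof is correct: the trivial upper bound plus the telescoping/Cauchy--Schwarz path argument over each non-edge pair, summed with the crude factor $|\mathcal W^c|$, yields exactly the constant $L_*=1/(1+d(\mathcal G)|\mathcal W^c|)$. The paper itself only cites \cite{C-L-H-X-Y} for this lemma, and your argument is precisely the standard one given there, so there is nothing to add.
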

%
\begin{remark}
$L_*$ has a strictly positive lower bound as
\[
L_* = \frac{1}{1+ d(\mathcal G)|\mathcal W^c|}  \geq \frac{1}{1+ d(\mathcal G)N^{2}}.
\]
\end{remark}

 \subsection{Main result}
 Based on Subsections \ref{subsecmodels} and \ref{s3.1}, our model for network-reduced lossless power grids can be restated as \eqref{Ku-iner-net} together with the restriction \eqref{eqsum0}, i.e.,
 \begin{align}\begin{aligned}\label{Ku-iner-net2}
&m_i\ddot{\theta}_i+d_i\dot{\theta}_{i}  = \Omega_{i} + \sum_{j=1}^{N} a_{ij} \sin(\theta_{j} - \theta_{i}),\,\, i=1,2,\dots,N, \\ &\sum_{i=1}^N \Omega_{i}=0,\qquad  a_{ij}=a_{ji}.
\end{aligned}\end{align}
  In this subsection, we present the  main
 result in this paper.  We begin by setting several extremal parameters:
\begin{align*}\begin{aligned} 
a_u :=\max\left\{a_{ij}: (j,i)\in \mathcal W \right\},& \quad a_{\ell}:=\min\left\{a_{ij}: (j,i)\in \mathcal W \right\}, \\
d_u := \max_{1 \leq i\leq N}d_i, \quad   d_\ell := \min_{1 \leq i\leq N}d_i,& \quad m_u := \max_{1 \leq i\leq N}m_i, \quad m_\ell := \min_{1 \leq i\leq N}m_i.
 \end{aligned}\end{align*}
We also set fluctuations of parameters:
\begin{align*}\begin{aligned}
\hat d_i:=d_i-\frac1N\sum_{i=1}^N d_i,& \quad  \hat D=diag(\hat d_1, \hat d_2,\dots, \hat d_N),\\
\hat m_i:=m_i-\frac1N\sum_{i=1}^N m_i,&  \quad \hat M=diag(\hat m_1, \hat m_2,\dots, \hat m_N).
 \end{aligned}\end{align*}
Using those notations, we introduce our main assumptions on the parameters and initial configurations below.
\begin{itemize}
\item[${\bf (H1)}$] The underlying graph $\mathcal G$ is connected.
\item[${\bf (H2)}$] Let $D_0 \in (0,\pi)$ be given. The parameters  satisfy
 \begin{equation}\label{assume}
a_u^2N^2(2m_u+\lambda)<d_\ell^2(2\mathcal R_0a_\ell L_*N-\lambda),
 \end{equation}
where $\mathcal{R}_0 := \frac{\sin D_0}{D_0}$, $L_*$ is given in Lemma \ref{Equivalence.comm}, and
\begin{equation}\label{lambda}
\lambda:=  \frac{\sqrt{tr({\hat D}^2)}}{\sqrt{N}} + \frac{2\sqrt{tr({\hat M}^2)}}{\sqrt{N}}.
\end{equation}
\item[${\bf (H3)}$] For some $\displaystyle \varepsilon\in \left(\frac{a_{u}^{2}N^2}{d_\ell\big(2\mathcal{R}_0a_\ell L_*N-\lambda\big)
},\,\, \frac{d_\ell}{2m_u+\lambda}\right)$,  the   parameters  and initial data satisfy
\begin{equation}\label{assumpB}
\max \lt\{ \sqrt{\wt\me(0)}, \frac{2\sqrt{2}C_1\max\{\e, 1\}\|\Omega\|}{\wt C_\ell \sqrt{C_0}}\rt\} <  \frac{\sqrt{C_0}}{2}D_0,
\end{equation}
where
\begin{align*}
 C_0 &:= \min \lt \{\frac{m_\ell}{2}, \e d_\ell \lt(1 - 2\varepsilon\frac{m_u}{d_\ell} \rt) \rt \},  \quad  C_1 :=\max \lt \{\frac{3m_u}{2}, \e d_u \lt(1 + 2\varepsilon\frac{m_u}{d_\ell} \rt) \rt\}, \cr
 \wt C_\ell &:= \min\lt\{d_\ell - 2\e m_u, 2\e \mathcal{R}_0 a_\ell L_*N - \frac{a_u^2 N^2}{d_\ell} \rt\} - \e\lambda,
\end{align*}
and
\[
\wt{\me}(0) :=\e\sum_{i=1}^N d_i (\theta_i(0)-\theta_c(0))^2+ 2\e\sum_{i=1}^N m_i (\theta_i(0) - \theta_c(0))\omega_i(0)+ \sum_{i=1}^N m_i \omega_i^2(0)
\]
with
\[
\theta_c(0) := \frac1N \sum_{i=1}^N \theta_i(0).
\]
\end{itemize}
Then we are now in a position to state our main theorem in this paper.
\begin{theorem}\label{thm4}
Suppose that the hypotheses ${\bf (H1)}$-${\bf (H3)}$ hold. Then  the global solution $\theta(t)$ to the system \eqref{Ku-iner-net2} asymptotically exhibits phase-locked states.
\end{theorem}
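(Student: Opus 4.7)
The plan is to construct a virtual energy functional, tuned by the free parameter $\varepsilon$ from ${\bf (H3)}$, that is simultaneously coercive in $(\theta-\theta_c\mathbf 1,\omega)$ and strictly dissipative along the flow as long as phases remain within a neighborhood of size $D_0$. A bootstrap argument then confines the trajectory to this neighborhood for all time, after which the analyticity of the vector field allows the {\L}ojasiewicz gradient inequality to upgrade $L^\infty$-boundedness to convergence to an equilibrium, i.e.\ a phase-locked state.

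The system \eqref{Ku-iner-net2} can be cast as $m_i\ddot\theta_i + d_i\dot\theta_i = -\partial_{\theta_i}V(\theta)$ with the real-analytic potential
\[
V(\theta) = -\sum_{i=1}^N \Omega_i \theta_i + \tfrac{1}{4} \sum_{i,j=1}^N a_{ij}\bigl(1-\cos(\theta_j-\theta_i)\bigr).
\]
The mechanical energy $E = \tfrac{1}{2}\sum_i m_i\omega_i^2 + V$ decays at rate $-\sum_i d_i\omega_i^2$, but this alone gives no $L^\infty$ control on $\theta$ since the linear piece $-\sum_i\Omega_i\theta_i$ is unbounded below. The remedy is the virtual energy
\[
\wt\me(t):= \varepsilon\sum_i d_i(\theta_i-\theta_c)^2 + 2\varepsilon\sum_i m_i(\theta_i-\theta_c)\omega_i + \sum_i m_i\omega_i^2,
\]
which extends the $t=0$ expression of ${\bf (H3)}$; here $\theta_c(t):=\tfrac{1}{N}\sum_i\theta_i(t)$. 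Completing the square in the cross term and using the range of $\varepsilon$ imposed in ${\bf (H3)}$ yields the coercivity
\[
C_0\Bigl(\sum_i(\theta_i-\theta_c)^2+\sum_i\omega_i^2\Bigr)\leq \wt\me(t)\leq C_1\Bigl(\sum_i(\theta_i-\theta_c)^2+\sum_i\omega_i^2\Bigr),
\]
with the constants $C_0,C_1$ from ${\bf (H3)}$.

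Differentiating $\wt\me$ along \eqref{Ku-iner-net2} and substituting the equation for $m_i\dot\omega_i$ produces a kinetic dissipation $-2\sum_i d_i\omega_i^2$, a coupling contribution $-2\varepsilon \sum_i(\theta_i-\theta_c)\sum_j a_{ij}\sin(\theta_j-\theta_i)$ from the cross term, inhomogeneity remainders involving $\hat D$ and $\hat M$ that collect exactly into the quantity $\lambda$ of \eqref{lambda}, and a forcing $2\varepsilon\sum_i(\theta_i-\theta_c)\Omega_i$. Assuming the bootstrap hypothesis that all phase differences stay below $D_0$, the inequality $\sin x\geq \mathcal R_0 x$ for $0\leq x\leq D_0$ combined with Lemma \ref{Equivalence.comm} turns the coupling term into $-2\varepsilon \mathcal R_0 a_\ell L_* N\sum_i(\theta_i-\theta_c)^2$ modulo controlled errors, while a Young inequality on the mixed dissipation $-2\sum_i d_i\omega_i^2 + 2\varepsilon\sum_i m_i(\theta_i-\theta_c)\dot\omega_i$ with weight of order $a_u^2N^2/d_\ell$ --- which is precisely the role of ${\bf (H2)}$ --- makes the net coefficient of $\sum_i(\theta_i-\theta_c)^2 + \sum_i\omega_i^2$ equal to $-\wt C_\ell<0$. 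One then arrives at
\[
\tfrac{d}{dt}\wt\me(t)\leq -\wt C_\ell\Bigl(\sum_i(\theta_i-\theta_c)^2+\sum_i\omega_i^2\Bigr) + 2\varepsilon\|\Omega\|\Bigl(\sum_i(\theta_i-\theta_c)^2\Bigr)^{1/2},
\]
which via coercivity and another Young step closes into $\tfrac{d}{dt}\wt\me\leq -c_1\wt\me + c_2\|\Omega\|^2$.

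The two quantitative smallness conditions in \eqref{assumpB} are precisely tuned so that this differential inequality keeps $\sum_i(\theta_i(t)-\theta_c(t))^2<C_0 D_0^2/4$, hence $|\theta_i(t)-\theta_j(t)|<D_0$, for all $t\geq 0$: the condition $\sqrt{\wt\me(0)}<\tfrac{\sqrt{C_0}}{2}D_0$ covers the initial slice, while $\tfrac{2\sqrt 2 C_1\max\{\varepsilon,1\}\|\Omega\|}{\wt C_\ell\sqrt{C_0}}<\tfrac{\sqrt{C_0}}{2}D_0$ controls the asymptotic size driven by the forcing. A standard continuity argument closes the bootstrap and delivers a global $W^{1,\infty}$-in-time bound on $(\theta-\theta_c\mathbf 1,\omega)$. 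Since the trajectory is then trapped in a compact subset on which $V$ is real-analytic, the {\L}ojasiewicz theorem for second-order gradient-like systems (in the form used in \cite{C-L-H-X-Y}) upgrades boundedness to convergence: $\omega(t)\to 0$ and $\theta(t)$ tends to a critical point of $V$, which is exactly a phase-locked state. The main obstacle will be the careful bookkeeping in the energy identity for $\tfrac{d}{dt}\wt\me$: the inhomogeneities $\hat d_i,\hat m_i$ produce non-sign-definite cross terms that must all be absorbed into the single scalar $\lambda$, and the open interval for $\varepsilon$ prescribed in ${\bf (H3)}$ is exactly the range that simultaneously yields $C_0>0$ and $\wt C_\ell>0$; selecting Young weights so that every mixed term is swallowed without wasting room in ${\bf (H2)}$ is what forces the precise algebraic form of \eqref{assume}.
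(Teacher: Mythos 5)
Your overall architecture coincides with the paper's: the same virtual energy $\wt\me$ built from the fluctuation $\theta-\theta_c$, the same coercivity constants $C_0,C_1$, the same dissipation estimate producing $\wt C_\ell$ with the inhomogeneity corrections collected into $\lambda$, the same continuity/bootstrap argument keeping $\sqrt{\wt\me(t)}<\tfrac{\sqrt{C_0}}{2}D_0$ for all time, and the same {\L}ojasiewicz endgame. (The paper reaches the differential inequality by first estimating the unshifted energy $\me$ and then transferring to $\wt\me$ via the identity $\wt\me=\me-2\e\theta_s\theta_c+\e\,tr(D)\theta_c^2-2\e\omega_s\theta_c$, whereas you differentiate $\wt\me$ directly; that is a cosmetic difference.)

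There is, however, one genuine gap at the very end. Your bootstrap delivers boundedness of $(\theta-\theta_c\mathbf 1,\omega)$, and you then assert that ``the trajectory is trapped in a compact subset'' so that the {\L}ojasiewicz theorem applies. But the convergence result for second-order gradient-like systems with analytic potential (Lemma \ref{convergencethm2}) requires $\theta(\cdot)\in L^\infty(\mathbb R^+,\mathbb R^N)$ --- boundedness of $\theta$ itself in $\mathbb R^N$, not merely of its fluctuation. Boundedness of $\theta-\theta_c\mathbf 1$ leaves open the possibility that the mean $\theta_c(t)$ drifts to infinity (a priori $\dot\theta_c=\omega_c$ only gives linear growth), in which case the trajectory is not precompact in $\mathbb R^N$ and the analytic-potential argument cannot be invoked; working on the torus instead is not an option here, since the {\L}ojasiewicz machinery is applied to the lift on $\mathbb R^N$. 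The paper closes this gap with the conservation law $\tfrac{d}{dt}(\theta_s+\omega_s)=0$ for $\theta_s=\sum_i d_i\theta_i$, $\omega_s=\sum_i m_i\omega_i$, which holds precisely because $\sum_i\Omega_i=0$ and $a_{ij}=a_{ji}$; combined with the unconditional bound on $\omega$ from Lemma \ref{lemmafreqbdd} this gives $|\theta_s(t)|\leq K_0$, and together with the bounded phase differences this pins down each $\theta_i$. Your write-up never uses the zero-sum normalization $\sum_i\Omega_i=0$, which is the tell-tale sign that this step is missing; adding the conservation-law argument completes the proof.
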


\subsection{Discussions}

  {We would like to explain about the accessibility of the assumptions ${\bf (H1)}$-${\bf (H3)}$.  The assumption ${\bf (H1)}$  guarantees the positivity of the constant $L_*$ appeared in \eqref{L}, and then ${\bf (H2)}$  can hold true, for example, when the inertia is small and the variances of inertia and damping are also  small.  Now,  the assumption ${\bf (H2)}$  
  ensures that the interval for admissible  $\varepsilon$ is nonempty, which further  guarantee that  $\wt C_\ell>0$.  Finally, the condition \eqref{assumpB} can be fulfilled  when the size of initial data  (in terms of the initial energy $\wt\me(0)$) and the size of (micro) natural frequencies $\|\Omega\|$ are small.}

  {By the definition of the perturbed matrices $\hat D$ and $\hat M$, we find   $\lambda = 0$ if $d_i = d$ and $m_i = m$ for all $1 \leq i \leq N$.  Moreover, in the case of uniform  inertia and damping, our assumptions ${\bf (H1)}$- ${\bf (H3)}$   become the ones in \cite{C-L-H-X-Y}.  
 }

  {We acknowledge that our estimate is conservative in the sense that the conditions are   sufficient but not necessary. In spite of that, Theorem \ref{thm4} gives  {\em explicit} formulas to   guarantee that a  given state  must be in the region of attraction for    synchronous states of a grid system by verifying that it meets the framework in ${\bf (H1)}$-${\bf (H3)}$,  which is easy to operated since only algebraic operations are involved.  We could also observe
some interesting points from the statement in Theorem \ref{thm4}. We notice that the parametric condition \eqref{assume} becomes more flexible  when we increase the constant $L_*$ and/or decrease the constant $\lambda$. Recalling \eqref{L} we see that if one decreases the diameter of the graph or increase the number of arcs, then the value of $L_*$ becomes larger and   the parametric condition is relaxed. On the other hand, by \eqref{lambda}, the constant $\lambda$ depends on the fluctuations of the nonuniform parameters $d_i$ and $m_i$; thus,   the parametric fluctuations hinder  the synchronization. These two observations are consistent with our intuition and  give  some qualitative understanding for the synchronizability versus the system parameters.}

  {Compared to   \cite{D-B-1, L-X-Y1}, the advantage of our main result lies in  at least two aspects. First,   we study the general systems in which  the  dampings can be inhomogeneous, i.e., the ratio of damping over inertia can be different between generators.  In  comparison, the analysis in \cite{L-X-Y1}  is  limited to the case of homogeneous dampings. 
Second, we extend the network topology to the most general case, i.e., the underlying graph can be arbitrary except the fundamental restriction that the network should be connected. Here,  the connectedness is indeed necessary  for synchronization;  otherwise, the oscillators in different components  cannot be expected to synchronize. In this sense, our assumption on the connectivity is most general.   In contrast,   the main result  in \cite{D-B-1} impliedly assumes that the underlying network is all-to-all interacted, i.e., each pair of nodes are connected to each other directly;   in \cite{L-X-Y1}, a basic hypothesis is that  the underlying graph should have a diameter less than or equal to 2.}

%
%

\section{Proof of main result: convergence to   phase-locked states}
\setcounter{equation}{0}
In this section, we give the proof of the main result, Theorem \ref{thm4}. Our main strategy can be summarized as follows. In Subsection \ref{sec_gradform}, we  present a gradient formulation of the system \eqref{Ku-iner-net2} and introduce some related theory. This theory tells   that the boundedness of trajectory implies its convergence.   Then, in order to show the boundedness,  we construct a virtual  energy functional in Subsection  \ref{sec_apriori}.  The energy functional  $ \wt \me(t)$    involves the fluctuation of phases around their averaged quantity \begin{equation}\label{thetacc}\theta_c(t) = \frac 1N\sum_{i=1}^N \theta_i(t).\end{equation}  In order to illustrate the reason to use such an energy, we begin with the  energy functional $\me(t)$ which was introduced in \cite{C-L-H-X-Y}.     In Subsection \ref{sec_main}, we combine the above   estimate  and  theory  to derive  the convergence to phase-locked states  for   the power networks \eqref{Ku-iner-net2}.

\subsection{A gradient-like flow formulation}\label{sec_gradform} In this part we present a new formulation of the system \eqref{Ku-iner-net} as a second-order gradient-like system in the case of symmetric capacity, i.e., $a_{ij} = a_{ji}$ for all $i,j \in \{1, 2, \dots,N\}$.  For the classic Kuramoto model, the potential function in the gradient flow   was first introduced in \cite{V-W}, which  can be extended to the  Kuramoto model with symmetric interactions.
 The following result was presented in \cite{C-L-H-X-Y}; we sketch the proof here for the reader.
 \begin{lemma}\label{lemgradform}
The system \eqref{Ku-iner-net} is a second-order gradient-like system with a real analytical potential $f$, i.e.,
\begin{equation}\label{gradsyst} M {\ddot \theta} + D{\dot \theta} = \nabla f(\theta), \end{equation}
if and only if  the adjacency matrix $A= (a_{ij})$ is symmetric.
\end{lemma}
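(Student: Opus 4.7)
The plan is to prove both directions by explicitly constructing a candidate potential for sufficiency and by invoking symmetry of the Hessian for necessity. The analyticity of the resulting $f$ will follow immediately since it is built out of $\sin$, $\cos$, and polynomials.

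For the ``if'' direction, suppose $a_{ij} = a_{ji}$ for all $i,j$. I would propose the candidate
\[
f(\theta) := \sum_{i=1}^{N} \Omega_i \theta_i + \frac{1}{2}\sum_{i,j=1}^{N} a_{ij}\cos(\theta_j - \theta_i),
\]
which is manifestly real analytic on $\mathbb{R}^N$. Computing $\partial_{\theta_i} f$, the linear part contributes $\Omega_i$, while differentiating the double sum produces two contributions, one from the terms where the inner index matches $i$ and one from the outer index matching $i$:
\[
\partial_{\theta_i}\!\left[\tfrac{1}{2}\sum_{j,k} a_{jk}\cos(\theta_j - \theta_k)\right] = \tfrac{1}{2}\sum_{k} a_{ik}\sin(\theta_k - \theta_i) + \tfrac{1}{2}\sum_{j} a_{ji}\sin(\theta_j - \theta_i).
\]
Using $a_{ij} = a_{ji}$ collapses this to $\sum_j a_{ij}\sin(\theta_j - \theta_i)$. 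Hence $\partial_{\theta_i} f(\theta) = \Omega_i + \sum_j a_{ij}\sin(\theta_j - \theta_i)$, which is exactly the right-hand side of \eqref{Ku-iner-net}, so \eqref{gradsyst} holds.

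For the ``only if'' direction, suppose there exists some $C^2$ function $f$ with $\nabla f(\theta)$ equal to the right-hand side of \eqref{Ku-iner-net}. Then for any $i \neq k$ the equality of mixed partials $\partial_{\theta_k}\partial_{\theta_i} f = \partial_{\theta_i}\partial_{\theta_k} f$ must hold. Computing the two sides from the explicit form of the right-hand side gives
\[
a_{ik}\cos(\theta_k - \theta_i) = a_{ki}\cos(\theta_i - \theta_k),
\]
and since cosine is even this reduces to $a_{ik} = a_{ki}$ for every $i \neq k$. The diagonal case is vacuous because $\sin(0) = 0$ makes the self-interaction terms invisible, so we may assume $a_{ii} = a_{ii}$ trivially. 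This yields symmetry of $A$.

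Neither direction involves a substantive obstacle: both parts are elementary computations. The only point requiring a small amount of care is bookkeeping the two symmetric contributions in the derivative of the double sum, to make sure the factor $\tfrac{1}{2}$ is correctly absorbed by the symmetry $a_{ij} = a_{ji}$; and conversely making sure the mixed-partial identity is tested at indices $i \neq k$ where the $\cos$ factor is nonzero as a function on $\mathbb{R}^N$ (which it is, because distinct components of $\theta$ are free variables). The analytic regularity of $f$ is automatic, so no further argument is needed for the gradient-\emph{like} structure required later for applying {\L}ojasiewicz-type convergence.
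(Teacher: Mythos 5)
Your proof is correct and follows essentially the same route as the paper: the same candidate potential $f(\theta)=\sum_k\Omega_k\theta_k+\tfrac12\sum_{k,l}a_{kl}\cos(\theta_k-\theta_l)$ for sufficiency, and equality of mixed partials for necessity. You merely write out the gradient computation that the paper leaves implicit, so nothing further is needed.
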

\begin{proof} (i) Suppose that the   matrix $A$ is symmetric, i.e.,
$a_{ij}=a_{ji}.$
We define   $f: \mathbb{R}^N\to \mathbb{R}$ as
\begin{equation}\label{eqpot}
f(\theta) :=\sum_{k=1}^N\Omega_k \theta_k+\frac{1}{2}\sum_{k,l=1}^N a_{kl}\cos(\theta_k-\theta_l).
\end{equation}
It is clearly  analytic in $\theta$, and 
system \eqref{Ku-iner-net} is a second-order gradient-like system \eqref{gradsyst} with the potential $f$ defined in \eqref{eqpot}.

(ii) We now assume that the system \eqref{Ku-iner-net} is a gradient system with an analytic potential  $f$, i.e.,
\[ \frac{\partial f(\theta)}{\partial {\theta_i}} =\Omega_{i}+   \sum_{j=1}^{N} a_{ij} \sin(\theta_{j} - \theta_{i}),\quad i=1,2,\dots,N. \]
Then the potential $f$ must satisfy
$\displaystyle \frac{\partial^2 f }{\partial{\theta_k} \partial\theta_l}=  \frac{\partial^2 f }{\partial{\theta_l} \partial\theta_k}$ for $l\neq k.$
This concludes $a_{lk}=a_{kl}$ for all $l,k \in \{1,2,\dots,N\}$. 
\end{proof}

We next present a convergence result for the  second-order gradient-like system on $\mathbb R^N$:
\begin{equation}\label{mrgradient}
M\ddot\theta+D\dot\theta =\nabla  f(\theta), \quad \theta\in \mathbb{R}^N, \quad t\geq 0.
\end{equation}
Note  that the set of equilibria $\mathcal{S}$ coincides with the set of critical points of the potential $f$:
\[ {\mathcal S} := \{ \theta \in \bbr^N:~\nabla f(\theta) = 0 \}. \]
Based on the celebrated theory of {\L}ojasiewicz \cite{Lo1}, a convergence result of the gradient-like system with uniform inertia was established in \cite{H-J};  as a slight extension the following result was given in \cite{L-X-Y1}. 
\begin{lemma}\label{convergencethm2} \cite{L-X-Y1}
Assume that $f$ is analytic and let $\theta=\theta(t)$ be a global solution of \eqref{mrgradient}.
 If $\theta(\cdot)\in W^{1,\infty}(\mathbb R^+,\mathbb R^N)$, i.e., $\theta(\cdot) \in  L^\infty(\mathbb R^+,\mathbb R^N)$ and $\dot\theta(\cdot)\in  L^\infty(\mathbb R^+,\mathbb R^N)$, then there exists  an equilibrium   $\theta_e\in  \mathcal S$  such that
\[\lim_{t\to+\infty}\left\{\|\dot\theta(t)\|+\|\theta(t)-\theta_e\|\right\}=0.\]
\end{lemma}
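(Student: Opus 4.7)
The plan is to reduce Theorem~\ref{thm4} to the {\L}ojasiewicz-type convergence of Lemma~\ref{convergencethm2}: by Lemma~\ref{lemgradform} the system \eqref{Ku-iner-net2} is already a second-order gradient-like system driven by the analytic potential \eqref{eqpot}, so once the global $W^{1,\infty}$ bound $\theta\in W^{1,\infty}(\mathbb R^+,\mathbb R^N)$ is proved, Lemma~\ref{convergencethm2} supplies an equilibrium $\theta_e\in\mathcal S$ with $\theta(t)\to\theta_e$ and $\dot\theta(t)\to 0$. Equilibria satisfy $\omega_e=0$, so the relative phases $\theta_i(t)-\theta_j(t)$ converge to $\theta_{e,i}-\theta_{e,j}$, which is phase-locking. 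The entire task is thus to produce the global bound.

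The engine is the virtual energy $\widetilde{\mathcal E}(t)$ obtained by writing the right-hand side of the $\widetilde{\mathcal E}(0)$ formula from ${\bf (H3)}$ at running time $t$, centered at $\theta_c(t)=\frac1N\sum_i\theta_i(t)$. I would first establish the two-sided coercivity
\[
C_0\bigl(\|\theta-\theta_c\mathbf 1\|^2+\|\omega\|^2\bigr)\le \widetilde{\mathcal E}(t)\le C_1\bigl(\|\theta-\theta_c\mathbf 1\|^2+\|\omega\|^2\bigr)
\]
by completing the square in the cross term $2\varepsilon m_i(\theta_i-\theta_c)\omega_i$; the constants $C_0,C_1$ of ${\bf (H3)}$ arise exactly there and both lie in $(0,\infty)$ on the admissible $\varepsilon$-interval of ${\bf (H3)}$. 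Differentiating $\widetilde{\mathcal E}$ along \eqref{Ku-iner-net2} and substituting $m_i\dot\omega_i=-d_i\omega_i+\Omega_i+\sum_j a_{ij}\sin(\theta_j-\theta_i)$ yields four kinds of contributions: a damping piece $-2\sum_i d_i\omega_i^2-2\varepsilon\sum_i m_i\omega_i^2$, a coupling piece that symmetrizes via $a_{ij}=a_{ji}$ into $-\varepsilon\sum_{i,j}a_{ij}(\theta_j-\theta_i)\sin(\theta_j-\theta_i)$, a total-derivative piece $\tfrac{d}{dt}\sum_{i,j}a_{ij}\cos(\theta_j-\theta_i)$ that I absorb by passing to the modified Lyapunov $\widetilde{\mathcal L}:=\widetilde{\mathcal E}+\sum_{i,j}a_{ij}\bigl(1-\cos(\theta_j-\theta_i)\bigr)$, and residual bilinear terms proportional to $\hat d_i,\hat m_i$ controlled by Cauchy--Schwarz and repackaged into $\lambda$ from \eqref{lambda}. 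On the region $\max_i|\theta_i-\theta_c|<D_0$, the inequality $\sin x/x\ge \mathcal R_0$ together with Lemma~\ref{Equivalence.comm} converts the symmetrized coupling piece into the quadratic dissipation $-2\varepsilon \mathcal R_0 a_\ell L_* N\,\|\theta-\theta_c\mathbf 1\|^2$, while the driving term $\sum_i(\theta_i-\theta_c)\Omega_i$ is split by Young's inequality so that its quadratic part is absorbed by $\widetilde C_\ell$. Collecting everything produces a Gr\"onwall-type inequality
\[
\frac{d}{dt}\widetilde{\mathcal L}(t)\le -\alpha\,\widetilde{\mathcal L}(t)+\beta\|\Omega\|^2
\]
for explicit positive constants $\alpha,\beta$ depending only on $(C_0,C_1,\widetilde C_\ell,\varepsilon)$.

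Next comes a standard continuity argument. Set $T^*:=\sup\{T>0:\max_i|\theta_i(t)-\theta_c(t)|<D_0\text{ on }[0,T]\}$; the hypothesis ${\bf (H3)}$ together with the coercivity forces $T^*>0$, on $[0,T^*)$ the previous inequality gives $\widetilde{\mathcal L}(t)\le \max\{\widetilde{\mathcal L}(0),\beta\|\Omega\|^2/\alpha\}$, and the quantitative bound \eqref{assumpB} combined with coercivity strictly preserves $\max_i|\theta_i-\theta_c|<D_0$, forcing $T^*=\infty$. Therefore $\theta-\theta_c\mathbf 1$ and $\omega$ lie in $L^\infty(\mathbb R^+,\mathbb R^N)$. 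To upgrade to $\theta\in L^\infty$, I sum \eqref{Ku-iner-net2} over $i$: symmetry of $A$ and $\sum_i\Omega_i=0$ give the conservation law $\frac{d}{dt}\bigl(\sum_i m_i\omega_i+\sum_i d_i\theta_i\bigr)=0$, so $\sum_i d_i\theta_i$ is bounded because $\omega$ is, which combined with the bound on $\theta-\theta_c\mathbf 1$ bounds $\theta_c$ and hence $\theta$ in $\mathbb R^N$. Since $\dot\theta=\omega\in L^\infty$, the hypothesis of Lemma~\ref{convergencethm2} is verified and the proof closes.

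The main obstacle is the algebraic bookkeeping in the dissipation estimate: the choice of weights in $\widetilde{\mathcal E}$ must be tuned so that the inhomogeneity residuals collapse precisely into the scalar $\lambda=\sqrt{tr(\hat D^2)/N}+2\sqrt{tr(\hat M^2)/N}$, while the symmetrized coupling contributes $-2\varepsilon\mathcal R_0 a_\ell L_* N\|\theta-\theta_c\mathbf 1\|^2$ and the Cauchy--Schwarz competition between $\|\omega\|^2$ and $\|\theta-\theta_c\mathbf 1\|^2$ can be absorbed only within the admissible $\varepsilon$-window carved out by ${\bf (H2)}$ and ${\bf (H3)}$. The physical energy used by the direct method and the simpler Lyapunov of \cite{C-L-H-X-Y} do not handle these inhomogeneity residuals; the whole point of this new $\widetilde{\mathcal E}$ with the weights $d_i$ and $m_i$ is that it forces those residuals into a form controlled by $\lambda$ alone.
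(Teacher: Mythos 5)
Your proposal does not prove the statement in question. Lemma \ref{convergencethm2} is the {\L}ojasiewicz-type convergence result for the abstract second-order gradient-like system \eqref{mrgradient}: given only that $f$ is analytic and that the trajectory lies in $W^{1,\infty}(\mathbb R^+,\mathbb R^N)$, one must produce a \emph{single} equilibrium $\theta_e$ to which $\theta(t)$ converges. What you have written is instead a sketch of the proof of Theorem \ref{thm4} --- the virtual energy $\wt\me$, the bootstrap argument forcing $T^*=\infty$, the conservation law for $\theta_s+\omega_s$ --- all of which is machinery whose sole purpose in the paper is to verify the hypothesis $\theta\in W^{1,\infty}$ of Lemma \ref{convergencethm2}. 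Your opening sentence invokes Lemma \ref{convergencethm2} as a known fact, so relative to the statement you were asked to prove the argument is circular: the lemma is assumed, not established.

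A proof of the lemma itself must use the analyticity of $f$ in an essential way, since bounded trajectories of gradient-like flows with merely $C^\infty$ potentials can fail to converge (they may spiral around a continuum of equilibria). The standard route, following Haraux--Jendoubi \cite{H-J} as adapted in \cite{L-X-Y1}, is: (i) boundedness gives a nonempty compact connected $\omega$-limit set contained in $\mathcal S$ on which $f$ is constant; (ii) the {\L}ojasiewicz gradient inequality for analytic $f$ (near any $\bar\theta$ there are $\sigma\in(0,1/2]$ and $C>0$ with $|f(\theta)-f(\bar\theta)|^{1-\sigma}\le C\|\nabla f(\theta)\|$) is applied to a perturbed energy of the form $H(t)=\tfrac12\langle M\dot\theta,\dot\theta\rangle - f(\theta)+\varepsilon\langle\nabla f(\theta),M\dot\theta\rangle$ to derive a differential inequality showing $\|\dot\theta\|\in L^1$ near a point of the $\omega$-limit set, whence the trajectory has finite length and converges to a single $\theta_e$, and $\dot\theta\to0$. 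None of this appears in your write-up. (For the record, the paper does not reprove the lemma either --- it cites \cite{L-X-Y1} --- but a blind proof of this statement would have to supply the argument above rather than the energy estimates for the swing equations.)
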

%

 {Before we proceed, we first clarify that the Kuramoto oscillators are treated, in this paper, as a dynamic system on the whole space $\mathbb R^N$. Indeed, one can consider it as a system on the $N$-torus $\mathbb S^1\times \dots\times\mathbb S^1$ since the coupling function $\sin(\cdot)$ is $2\pi$-periodic. However, in order to apply the {\L}ojasiewicz's theory, we should treat the system \eqref{Ku-iner-net2} as a system on $\mathbb R^N$.  For more details  on {\L}ojasiewicz's theory  and applications, please refer to \cite{C-L-H-X-Y,H-J,L-X-Y,L-X-Y1}.}

  Then,  as  a direct application of Lemma \ref{convergencethm2}, we obtain {\it a priori} result on the complete frequency synchronization for \eqref{Ku-iner-net2}.
\begin{proposition} \label{convergencethm}
Let $\theta= \theta(t)$ be a solution to \eqref{Ku-iner-net2} in $W^{1, \infty}(\bbr^+, \bbr^N)$.
Then there exists  $\theta^\infty \in {\mathcal S}$ such that
$\lim_{t\to \infty}\{\|\dot\theta(t) \|+\|\theta(t)-\theta^\infty\|\}=0.$
\end{proposition}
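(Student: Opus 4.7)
The plan is to view Proposition \ref{convergencethm} as a direct corollary obtained by composing the two lemmas that have already been established in the preceding subsection, namely Lemma \ref{lemgradform} (gradient-like reformulation) and Lemma \ref{convergencethm2} (\L ojasiewicz-type convergence for second-order gradient-like systems). No genuinely new estimate is required; the only work is to check that every hypothesis of Lemma \ref{convergencethm2} is met by a $W^{1,\infty}$ solution of \eqref{Ku-iner-net2}, and that the limit point produced by that lemma lies in the equilibrium set $\mathcal{S}$ as defined in the current statement.

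First I would invoke Lemma \ref{lemgradform}: because the system \eqref{Ku-iner-net2} is posed with symmetric couplings $a_{ij}=a_{ji}$, the dynamics can be rewritten in the gradient-like form \eqref{gradsyst}, i.e.\ $M\ddot\theta + D\dot\theta = \nabla f(\theta)$, with the explicit potential
\[
f(\theta) = \sum_{k=1}^N \Omega_k \theta_k + \frac{1}{2}\sum_{k,l=1}^N a_{kl}\cos(\theta_k - \theta_l).
\]
Next I would observe that $f$ is real analytic on $\mathbb R^N$, since it is a finite sum of linear monomials in the $\theta_k$ and of cosines evaluated at linear combinations of the $\theta_k$; in particular, $f$ satisfies the analyticity hypothesis of Lemma \ref{convergencethm2}. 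I would also note that the equilibrium set defined in the statement of Proposition \ref{convergencethm} coincides with the critical set of $f$ featured in Lemma \ref{convergencethm2}, because $M$ and $D$ are positive definite and so the right-hand side of \eqref{gradsyst} vanishes precisely when $\nabla f(\theta) = 0$.

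With these two observations in hand, the assumption $\theta \in W^{1,\infty}(\mathbb R^+, \mathbb R^N)$ in Proposition \ref{convergencethm} is exactly the hypothesis of Lemma \ref{convergencethm2}. Applying that lemma directly to the global solution $\theta$ of \eqref{gradsyst} (with the analytic $f$ above) yields the existence of some $\theta^\infty \in \mathcal S$ such that
\[
\lim_{t\to\infty}\bigl\{\|\dot\theta(t)\| + \|\theta(t) - \theta^\infty\|\bigr\} = 0,
\]
which is the desired conclusion. There is essentially no analytical obstacle here; the proof amounts to a bookkeeping check that the gradient-like reformulation is applicable and that the analyticity and boundedness hypotheses are in force. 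The only subtle point worth flagging explicitly is the earlier remark that one must regard \eqref{Ku-iner-net2} as a system on $\mathbb R^N$ rather than on the torus, since the \L ojasiewicz gradient inequality underlying Lemma \ref{convergencethm2} is formulated for analytic functions on a Euclidean domain; this is already ensured by the way the state space has been set up in the paper.
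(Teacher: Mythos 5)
Your proposal is correct and matches the paper's own treatment: the paper derives Proposition \ref{convergencethm} precisely as a direct application of Lemma \ref{convergencethm2} to the gradient-like reformulation \eqref{gradsyst} furnished by Lemma \ref{lemgradform}, with the analytic potential \eqref{eqpot}. Your additional checks (analyticity of $f$, identification of $\mathcal S$ with the critical set, and the remark about working on $\mathbb R^N$ rather than the torus) are exactly the bookkeeping the paper relies on.
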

The following lemma declares that $\dot\theta(\cdot) $ is  in $L^\infty(\mathbb R^+,\mathbb R^N)$  once $\theta(t)$ is  a solution of  the system \eqref{Ku-iner-net2}.
\begin{lemma}\label{lemmafreqbdd}
Let $\theta = \theta(t)$ be a solution to   \eqref{Ku-iner-net2}. Then   ${\dot \theta}(\cdot)\in    L^\infty(\mathbb R^+,\mathbb R^N)$.
\end{lemma}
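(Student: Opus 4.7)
\textbf{Proof proposal for Lemma~\ref{lemmafreqbdd}.}

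The key observation is that, for each fixed $i$, the equation for $\omega_i = \dot\theta_i$ is a scalar linear first-order ODE in $\omega_i$ driven by a \emph{uniformly bounded} forcing term, independently of the behavior of the other phases. Indeed, rewriting the $i$th equation of \eqref{Ku-iner-net2} as
\[
m_i \dot\omega_i + d_i \omega_i \;=\; F_i(t), \qquad F_i(t) \;:=\; \Omega_i + \sum_{j=1}^N a_{ij}\sin\bigl(\theta_j(t) - \theta_i(t)\bigr),
\]
and using $|\sin(\cdot)|\le 1$, one sees that
\[
|F_i(t)| \;\le\; |\Omega_i| + \sum_{j=1}^N a_{ij} \;\le\; |\Omega_i| + N a_u \;=:\; M_i,
\]
for every $t\ge 0$. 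Thus the plan is to integrate this linear equation explicitly and read off an a priori bound on $\omega_i$ that depends only on $\omega_i(0)$, $\Omega_i$, $a_u$, $d_i$, and $N$.

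The main step is the integrating-factor representation: multiplying by $e^{d_i t / m_i}/m_i$ and integrating from $0$ to $t$ yields
\[
\omega_i(t) \;=\; \omega_i(0)\,e^{-d_i t/m_i} + \frac{1}{m_i}\int_0^t e^{-d_i(t-s)/m_i}\, F_i(s)\, ds.
\]
Applying the bound $|F_i(s)|\le M_i$ inside the integral and evaluating the resulting elementary integral gives
\[
|\omega_i(t)| \;\le\; |\omega_i(0)| + \frac{M_i}{d_i}\bigl(1 - e^{-d_i t/m_i}\bigr) \;\le\; |\omega_i(0)| + \frac{|\Omega_i| + N a_u}{d_i}.
\]
Taking the maximum over $i=1,\dots,N$ produces a time-independent bound on $\|\dot\theta(t)\|$, which is exactly the claim $\dot\theta(\cdot)\in L^\infty(\mathbb{R}^+,\mathbb{R}^N)$.

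There is no real obstacle here: the argument is self-contained because the nonlinearity enters only through the bounded sinusoidal coupling, so the cross-coupling between oscillators (and hence the nontrivial features of the graph topology or the inhomogeneity of $m_i,d_i$) plays no role in the bound. The positivity $d_i,m_i > 0$ is what makes the integrating factor a genuine decay, and it is the only structural hypothesis from \textbf{(H1)}--\textbf{(H3)} that is used. This lemma will then combine with Proposition~\ref{convergencethm} once the phase variables themselves are shown to be bounded via the virtual energy $\widetilde{\mathcal{E}}(t)$ to be constructed in Subsection~\ref{sec_apriori}.
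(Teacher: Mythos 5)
Your proof is correct and follows essentially the same approach as the paper: bound the forcing $\Omega_i + \sum_j a_{ij}\sin(\cdot)$ uniformly and exploit the linear damping $d_i\omega_i$ to get a time-independent bound on each $|\omega_i|$. The only difference is cosmetic — you use the explicit variation-of-constants (Duhamel) formula, whereas the paper multiplies by $\mathrm{sgn}(\omega_i)$ and applies Gronwall to $|\omega_i|$; your route even sidesteps the paper's small technical aside about the a.e.\ differentiability of $|\omega_i|$.
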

\begin{proof}It follows from \eqref{Ku-iner-net2} that $\omega_i$ satisfies
\[
m_i \dot{\omega}_i + d_i\omega_i = \Omega_i +  \sum_{j=1}^N a_{ij}
\sin(\theta_j - \theta_i) \leq |\Omega_i| + \sum_{j=1}^N a_{ij}.
\]
 Note that $\omega_i$ is an analytic function of $t$. This implies that the zero-set $\{t:\omega_i(t) = 0 \}$ is countable and finite in any finite time-interval, i.e.,  $|\omega_i(t)|$ is piecewise differentiable and continuous. We multiply the above relation by $\mbox{sgn}(\omega_i)$ and divide it by $m_i > 0$ to get
\[ \frac{d |\omega_i|}{dt} + \frac{d_i}{m_i}|\omega_i| \leq \frac{1}{m_i} \lt( |\Omega_i| + \sum_{j=1}^N a_{ij} \rt), \quad \mbox{a.e. $t \geq 0$}. \]
We now use Gronwall inequality and continuity of $|\omega_i|$ to obtain that for all $t>0$,
\begin{align*}\begin{aligned}
|\omega_i(t)| \leq  |\omega_i(0)| e^{-\frac{d_i}{m_i}t} + \frac{1}{d_i}\lt( |\Omega_i| + \sum_{j=1}^N a_{ij} \rt)\lt(1 - e^{-\frac{d_i}{m_i}t}\rt)
\leq  |\omega_i(0)| +  \frac{1}{d_i}\lt( |\Omega_i| + \sum_{j=1}^N a_{ij} \rt),
\end{aligned}\end{align*}
due to $a_{ij} \geq 0$. This concludes the   boundedness of $\omega(t)=\dot\theta(t)$ as a function in time.
\end{proof}
 {\begin{remark}\label{remark1}
By Proposition \ref{convergencethm} and Lemma \ref{lemmafreqbdd}, to prove that the phase-locked states emerges in the system \eqref{Ku-iner-net2}, it suffices to  show  $\theta(\cdot)\in L^\infty(\mathbb R^+,\mathbb R^N)$, i.e., the  trajectory of phase  is   bounded.
\end{remark}
\begin{remark}\label{remark2}
Considering the system of  coupled oscillators \eqref{Ku-iner-net}, with general natural frequencies with $\sum_{i=1}^N \Omega_i\neq0$, we cannot expect the trajectory $\theta(t)=(\theta_1(t),\dots,\theta_N(t))$ be bounded in $\mathbb R^N$, since the right hand side of  \eqref{Ku-iner-net} sums to $\sum_{i=1}^N \Omega_i\neq0$.  This is why we apply the macro-micro decomposition and  define the micro-variables in Section \ref{s3.1},  which allows us to assume  without loss of any generality  that $\sum_{i=1}^N\Omega_i=0$ and reduces to the model \eqref{Ku-iner-net2}. In the next subsection, this restriction will be crucially used.
\end{remark} }

\subsection{Construction of the energy functional $\wt\me$}\label{sec_apriori}
Inspired by \cite{C-L-H-X-Y}, we first introduce a temporal energy functional $\me$:
for $\varepsilon > 0$,
\begin{align}\begin{aligned}  \label{energy-1}
\mathcal{E} [\theta, \omega]&:= \varepsilon  \langle D\theta, \theta\rangle  + 2 \varepsilon\langle M\theta, \omega \rangle +
\langle M\omega, \omega\rangle\\& =\varepsilon \sum_{i=1}^N d_i \theta_i^2+ 2\varepsilon \sum_{i=1}^N m_i \theta_i\omega_i+ \sum_{i=1}^N m_i \omega_i^2.
\end{aligned}\end{align}
Here, the notation $\langle \cdot, \cdot\rangle$ represents the standard inner product in $\mathbb R^N$.
Then we easily find the equivalence relation between $\mathcal{E}[\theta,\omega]$ and $\|\theta\|^2 + \|\omega\|^2$.
\begin{lemma}\label{equivalence}
Let $\varepsilon\in \lt(0,\, \frac{d_\ell}{2m_u} \rt)$. Then we have the following relation: 
\[
C_0 (\|\theta\|^2+\|\omega\|^2) \leq {\mathcal
E}[\theta, \omega] \leq C_1 (\|\theta\|^2+\|\omega\|^2),  \quad \forall\,\theta, \omega\in \mathbb R^N, 
\]
where $C_0$ and $C_1$ are positive constants (independent of $(\theta, \omega)$)  given by
\begin{align*}
\begin{aligned}
C_0 := \min \lt \{\frac{m_\ell}{2}, \e d_\ell\lt(1 - 2\varepsilon\frac{m_u}{d_\ell} \rt) \rt \} \quad \mbox{and} \quad C_1 :=\max \lt \{\frac{3m_u}{2}, \e d_u \lt(1 + 2\varepsilon\frac{m_u}{d_\ell} \rt) \rt\}.
\end{aligned}
\end{align*}
\end{lemma}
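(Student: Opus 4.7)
The plan is to obtain both inequalities by an elementary application of Young's inequality to the indefinite cross term $2\varepsilon \sum_i m_i \theta_i\omega_i$, choosing the Young parameter so that the resulting coefficients assemble into exactly $C_0$ and $C_1$.

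First I would isolate the cross term and, for each index $i$, use the weighted AM--GM inequality with parameter $\alpha = 2\varepsilon$:
\[
|2\varepsilon m_i \theta_i \omega_i|
\;\le\; \varepsilon m_i \Bigl( 2\varepsilon\, \theta_i^2 + \tfrac{1}{2\varepsilon}\,\omega_i^2 \Bigr)
\;=\; 2\varepsilon^2 m_i \theta_i^2 + \tfrac{1}{2} m_i \omega_i^2.
\]
The choice $\alpha = 2\varepsilon$ is dictated by the target constant $m_\ell/2$ appearing in $C_0$: it guarantees that after the cross term is absorbed, exactly half of the $m_i\omega_i^2$ mass survives.

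For the lower bound, combining the display above with \eqref{energy-1} gives
\[
\mathcal{E}[\theta,\omega]\;\ge\; \sum_{i=1}^N \bigl(\varepsilon d_i - 2\varepsilon^2 m_i\bigr)\theta_i^2 + \tfrac{1}{2}\sum_{i=1}^N m_i \omega_i^2.
\]
Using $d_i \ge d_\ell$ and $m_i \le m_u$ in the $\theta$-coefficient, and $m_i \ge m_\ell$ in the $\omega$-coefficient, this is bounded below by
\[
\varepsilon d_\ell\Bigl(1 - 2\varepsilon\tfrac{m_u}{d_\ell}\Bigr)\|\theta\|^2 + \tfrac{m_\ell}{2}\|\omega\|^2.
\]
Here the hypothesis $\varepsilon < d_\ell/(2m_u)$ is exactly what makes the $\theta$-coefficient strictly positive, so that both terms are nonnegative and taking the minimum of the two prefactors yields $C_0(\|\theta\|^2 + \|\omega\|^2)$. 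For the upper bound I would run the same Young estimate in the opposite direction, obtaining
\[
\mathcal{E}[\theta,\omega]\;\le\; \sum_{i=1}^N \bigl(\varepsilon d_i + 2\varepsilon^2 m_i\bigr)\theta_i^2 + \tfrac{3}{2}\sum_{i=1}^N m_i \omega_i^2,
\]
and then use $d_i \le d_u$ together with the crude bound $2\varepsilon^2 m_u \le 2\varepsilon^2 m_u\, (d_u/d_\ell)$ (valid since $d_u \ge d_\ell$) to match the $\theta$-coefficient to $\varepsilon d_u(1 + 2\varepsilon m_u/d_\ell)$; the $\omega$-coefficient is dominated by $3m_u/2$. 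Taking the maximum gives $C_1$.

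There is no real obstacle — the whole argument is a one-parameter Young inequality — but the one subtle bookkeeping point is the asymmetric use of $d_\ell$ inside the expression $1 \pm 2\varepsilon m_u/d_\ell$ for both $C_0$ and $C_1$. For $C_0$ this is natural (the negative term must be controlled by the smallest damping), whereas for $C_1$ it is a slight over-estimate; I would flag this as the one place where a cleaner constant is possible but the stated form is retained for symmetry with $C_0$ and with the hypothesis $\varepsilon < d_\ell/(2m_u)$. No other lemma is needed beyond elementary inequalities on $\mathbb{R}^N$.
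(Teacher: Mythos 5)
Your proposal is correct and follows essentially the same route as the paper: the same Young inequality $|2\varepsilon m_i\theta_i\omega_i|\le 2\varepsilon^2 m_i\theta_i^2+\tfrac12 m_i\omega_i^2$, the same absorption of the cross term, and the same use of $d_u\ge d_\ell$ to cast the $\theta$-coefficient in the stated form of $C_1$. Your side remark about the mildly suboptimal constant in $C_1$ is accurate but does not affect the lemma.
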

\begin{proof} In \eqref{energy-1}, the cross term $  \theta_i \omega_i  $ can be estimated by  Young's inequality:
\[
|  \theta_i  \omega_i |\leq \varepsilon\theta_i^2  + \frac{\omega_i^2}{4\varepsilon}. \]
Then, we have
\[
2\varepsilon m_i |\theta_i \omega_i  |\leq 2\varepsilon^2m_i\theta_i^2  + \frac{m_i}{2}\omega_i^2 \leq 2\e^2 \frac{m_u}{d_\ell} d_i\theta_i^2 +  \frac{m_i}{2}\omega_i^2 ,
\]
and hence
\[
\sum_{i=1}^N\frac{m_i}{2}\omega_i^2+\varepsilon d_\ell \lt(1 - 2\varepsilon\frac{m_u}{d_\ell} \rt)\sum_{i=1}^N \theta_i^2\leq{\mathcal
E}[\theta, \omega] \leq
\sum_{i=1}^N\frac{3m_i}{2}\omega_i^2+\varepsilon d_u\lt(1 + 2\varepsilon\frac{m_u}{d_\ell} \rt)\sum_{i=1}^N \theta_i^2.
\]
This gives the desired result.
\end{proof}

\begin{lemma}\label{lemma-energy}Let $D_0\in (0, \pi)$ and  suppose that the phase configuration $\{\theta_i\}_{i=1}^N$ satisfies
\[   \max_{1 \leq i, j \leq N} |\theta_i - \theta_j | \leq D_0. \]
Then the following estimate holds.
\begin{eqnarray*}
&&(i)\,\,\,\,\,\,a_{u}\sum_{(i,j)\in
\mathcal W}\Big|\sin(\theta_{j}-\theta_i)(\omega_j-\omega_i)\Big| \leq \frac{a_u^2 N^2}{d_\ell}\|\theta - \theta_c\|^2 + d_\ell\|\omega\|^2. \cr
&& (ii)\,\, \sum_{(i,j)\in \mathcal W} a_{ij} \sin(\theta_j-\theta_i)(\theta_j-\theta_i) \geq 2\mathcal{R}_0 a_{\ell} L_* N\|\theta-\theta_c\|^2,
\end{eqnarray*}
where $\mathcal{R}_0$ is given by $\mathcal{R}_0 := \frac{\sin D_0}{D_0}$, and   the vector $\theta-\theta_c$ is understood as $\theta-\theta_c:=(\theta_1,\dots,\theta_N)-(\theta_c,\dots,\theta_c)$ with $\theta_c$ given in \eqref{thetacc}.
\end{lemma}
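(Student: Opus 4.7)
Both estimates rest on the elementary identity
\begin{equation*}
\sum_{i,j=1}^N (\theta_i - \theta_j)^2 = 2N\|\theta - \theta_c\|^2,
\end{equation*}
obtained by expanding the square and invoking $\theta_c = \frac{1}{N}\sum_i \theta_i$, together with its analogue $\sum_{i,j=1}^N (\omega_i - \omega_j)^2 \leq 2N\|\omega\|^2$. The plan is, in each case, to pass between the edge sum $\sum_{(i,j)\in\mathcal W}$ and the full double sum $\sum_{i,j=1}^N$ via Lemma \ref{Equivalence.comm} and then apply this identity; the trivial direction of that lemma suffices for the upper bound in (i), whereas the substantive direction carrying the constant $L_*$ is used for the lower bound in (ii).

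For (i), I would first apply $|\sin x| \leq |x|$ to drop the sine, then split the product by Young's inequality with a free weight $\beta > 0$:
\begin{equation*}
a_u |\theta_j - \theta_i||\omega_j - \omega_i| \leq \frac{a_u^2}{2\beta}(\theta_j - \theta_i)^2 + \frac{\beta}{2}(\omega_j - \omega_i)^2.
\end{equation*}
Summing over $(i,j) \in \mathcal W$, enlarging each sum to $\sum_{i,j=1}^N$ (both integrands being nonnegative), and invoking the identities above converts the right-hand side to $\frac{a_u^2 N}{\beta}\|\theta - \theta_c\|^2 + \beta N \|\omega\|^2$. The choice $\beta = d_\ell/N$ is uniquely forced by the prescribed constants and yields exactly the claimed bound.

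For (ii), I would use that $\sin x / x$ is strictly decreasing on $(0,\pi)$, so $x \sin x \geq \mathcal R_0 x^2$ whenever $|x| \leq D_0 < \pi$; the diameter hypothesis makes this applicable to each $x = \theta_j - \theta_i$. Factoring out $\mathcal R_0$ and the minimal capacity $a_\ell$ gives a lower bound of $\mathcal R_0 a_\ell \sum_{(i,j) \in \mathcal W}(\theta_j - \theta_i)^2$. Applying Lemma \ref{Equivalence.comm} from below, this edge sum is at least $L_* \sum_{i,j=1}^N (\theta_i - \theta_j)^2 = 2 L_* N \|\theta - \theta_c\|^2$, producing the required factor $2\mathcal R_0 a_\ell L_* N$.

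No step is genuinely delicate: both parts are short reductions, and the only real calibration is the weight $\beta = d_\ell/N$ in (i), dictated by the need to pair the $\omega$ term directly against the damping coefficient $d_\ell$ in the energy estimate that will follow in Subsection \ref{sec_apriori}.
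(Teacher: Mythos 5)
Your proposal is correct and follows essentially the same route as the paper's proof: part (i) uses $|\sin x|\leq |x|$ plus Young's inequality with exactly the weight $d_\ell/N$, enlarges the edge sum to the full double sum, and applies the identity $\sum_{i,j}|\theta_i-\theta_j|^2=2N\|\theta-\theta_c\|^2$ (and its $\omega$-analogue with $\|\omega-\omega_c\|\leq\|\omega\|$); part (ii) uses $x\sin x\geq \mathcal R_0 x^2$ on $[-D_0,D_0]$, factors out $a_\ell$, and invokes the lower bound of Lemma \ref{Equivalence.comm}. No discrepancies to report.
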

\begin{proof}
(i) We use  $|\sin(\theta_j-\theta_i)| \leq |\theta_j-\theta_i|$ and Young's inequality to obtain
$$\begin{aligned}
a_{u} \sum_{(i,j) \in \mathcal W}\Big|\sin(\theta_j-\theta_i)(\omega_j-\omega_i)\Big| &\leq \frac{a_{u}^2 N}{2d_\ell} \sum_{(i,j)\in \mathcal W} |\theta_j-\theta_i|^2+ \frac{d_\ell}{2N} \sum_{(i,j)\in
\mathcal W} |\omega_j-\omega_i|^2\cr
&\leq \frac{a_u^2 N}{2d_\ell}\sum_{1 \leq i,j \leq N}|\theta_i - \theta_j|^2 + d_\ell\|\omega\|^2\cr
&= \frac{a_u^2 N^2}{d_\ell}\|\theta - \theta_c\|^2 + d_\ell\|\omega\|^2,
\end{aligned}$$
where we used the relations that
\[\sum_{1 \leq i,j \leq N}|\theta_i - \theta_j|^2=2N\|\theta - \theta_c\|^2,\]
and \[
\sum_{(i,j)\in
\mathcal W} |\omega_j-\omega_i|^2 \leq \sum_{1 \leq i,j \leq N}|\omega_i - \omega_j|^2 =2N\|\omega - \omega_c\|^2 \leq  2N\|\omega\|^2.
\]
(ii)~ It follows from the assumption
\[
\max_{1 \leq i,j \leq N}|\theta_j-\theta_i|\leq D_{0} < \pi,
\]
and   the simple relation
\[
x\sin x\geq \mathcal{R}_0 x^2 \quad \mbox{for} \quad x \in [-D_0,D_0],
\]
that
$$\begin{aligned}
\sum_{(i,j) \in \mathcal W}a_{ij}\sin(\theta_j-\theta_i)(\theta_j-\theta_i) &\geq \mathcal{R}_0\sum_{(i,j)\in
\mathcal W}a_{ij}|\theta_j-\theta_i|^2\cr
&\geq \mathcal{R}_0 a_{\ell} L_* \sum_{1\leq i,j\leq N}|\theta_j-\theta_i|^2\cr &=2\mathcal{R}_0 a_{\ell} L_* N\|\theta-\theta_c\|^2.
\end{aligned}$$
Here $L_*$ is the positive constant explicitly defined in Lemma \ref{Equivalence.comm}.
\end{proof}
Recall that the system \eqref{Ku-iner-net2} can be rewritten as
\begin{align}
\begin{aligned} \label{first-KMI}
{\dot \theta}_i &= \omega_i, \quad i=1, 2, \dots, N, \quad t > 0, \\
{\dot \omega}_i &= \frac{1}{m_i} \left[ -d_i\omega_i +   \Omega_i +  \sum_{j=1}^{N}
a_{ij}\sin(\theta_j - \theta_i) \right],\\ &\sum_{i=1}^N \Omega_{i}=0,\qquad  a_{ij}=a_{ji}.
\end{aligned}
\end{align}
Next, we present quantitative estimates of the interaction force term.
For notational simplicity, we denote
\[
{\mathcal E}(t) := {\mathcal E}[\theta(t), \omega(t)], \quad t \geq 0.
\]
where $(\theta(t),\omega(t))$ is the solution to the system \eqref{Ku-iner-net2} or \eqref{first-KMI}.
\begin{proposition}\label{EnergyEstimateLemma} Let $D_0\in (0,\pi)$ and $\{\theta_i\}_{i=1}^N$ be any smooth solution to the system \eqref{Ku-iner-net2}.
Suppose that
\[
 a_u^2N m_u <  d_\ell^2 \mathcal R_0 a_\ell  L_*   \quad \mbox{and} \quad \max_{t\in [0,T_0]} \max_{1 \leq i, j \leq N} |\theta_i(t) - \theta_j(t) | \leq D_0. \]
 for some $T_0 > 0$.
Then, for any  $\varepsilon$ satisfying
\bq\label{condi_e}
\frac{a_{u}^{2}N}{2d_\ell
\mathcal{R}_0 a_\ell L_*}<\varepsilon<\frac{d_\ell}{2m_u},
\eq
we have
\begin{equation}\label{energy-0}
\frac{d}{dt}\me(t) + C_\ell \md(t) \leq 2\max\{\e, 1\}\| \Omega\|\lt( \|\theta - \theta_c\| + \|\omega\|\rt), \quad \mbox{for} \,\,\,t\in[0, T_0],
\end{equation}
where   $\md(t) := \md[\theta(t),\omega(t)]$   and $C_\ell$  are defined by
\[
\md[\theta,\omega]:= \|\omega\|^2 + \|\theta - \theta_c\|^2 \quad \mbox{and} \quad C_\ell:= \min\lt\{d_\ell - 2\e m_u, 2\e \mathcal{R}_0 a_\ell L_*N - \frac{a_u^2 N^2}{d_\ell} \rt\}.
\]
\end{proposition}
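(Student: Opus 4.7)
The plan is to differentiate $\mathcal{E}(t)$ along the flow \eqref{first-KMI}, simplify by exploiting the symmetry $a_{ij}=a_{ji}$ and the zero-sum condition $\sum_i\Omega_i=0$, and then absorb the interaction terms using the two inequalities in Lemma \ref{lemma-energy}. The definition of $C_\ell$ and the admissible interval \eqref{condi_e} for $\varepsilon$ are, as we will see, exactly what is needed to keep the dissipative coefficients non-negative.

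First I compute
\[
\frac{d}{dt}\mathcal{E} = 2\varepsilon\langle D\theta,\omega\rangle + 2\varepsilon\langle M\omega,\omega\rangle + 2\varepsilon\langle M\theta,\dot\omega\rangle + 2\langle M\omega,\dot\omega\rangle,
\]
and substitute $M\dot\omega = -D\omega + \Omega + F(\theta)$, where $F_i(\theta):=\sum_j a_{ij}\sin(\theta_j-\theta_i)$. The two terms $2\varepsilon\langle D\theta,\omega\rangle$ and $-2\varepsilon\langle\theta,D\omega\rangle$ cancel because $D$ is diagonal, leaving
\[
\frac{d}{dt}\mathcal{E} = 2\varepsilon\langle M\omega,\omega\rangle - 2\langle\omega,D\omega\rangle + 2\varepsilon\langle\theta,\Omega\rangle + 2\langle\omega,\Omega\rangle + 2\varepsilon\langle\theta,F(\theta)\rangle + 2\langle\omega,F(\theta)\rangle.
\]
The dissipation term is bounded by $-(2d_\ell - 2\varepsilon m_u)\|\omega\|^2$.

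Next I symmetrize the two interaction inner products. Using $a_{ij}=a_{ji}$ and the oddness of $\sin$,
\[
\langle\theta,F(\theta)\rangle = -\tfrac12\sum_{(i,j)\in\mathcal{W}} a_{ij}\sin(\theta_j-\theta_i)(\theta_j-\theta_i),\qquad \langle\omega,F(\theta)\rangle = -\tfrac12\sum_{(i,j)\in\mathcal{W}} a_{ij}\sin(\theta_j-\theta_i)(\omega_j-\omega_i).
\]
Lemma \ref{lemma-energy}(ii) then yields
\[
2\varepsilon\langle\theta,F(\theta)\rangle \leq -2\varepsilon\mathcal{R}_0 a_\ell L_* N\|\theta-\theta_c\|^2,
\]
while Lemma \ref{lemma-energy}(i), combined with $a_{ij}\leq a_u$ on the edge set, gives
\[
|2\langle\omega,F(\theta)\rangle| \leq \frac{a_u^2 N^2}{d_\ell}\|\theta-\theta_c\|^2 + d_\ell\|\omega\|^2.
\]
The crucial observation is that the $d_\ell\|\omega\|^2$ term produced by Young's inequality is precisely absorbed by one half of the damping dissipation $-2d_\ell\|\omega\|^2$, which is why the constants in Lemma \ref{lemma-energy}(i) are tuned to $d_\ell$.

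For the external forcing I use $\sum_i\Omega_i=0$, which was arranged in Section \ref{s3.1}: this gives $\langle\theta,\Omega\rangle = \langle\theta-\theta_c,\Omega\rangle$ since $\theta_c\mathbf{1}$ is orthogonal to $\Omega$. Then Cauchy--Schwarz produces
\[
2\varepsilon\langle\theta-\theta_c,\Omega\rangle + 2\langle\omega,\Omega\rangle \leq 2\max\{\varepsilon,1\}\|\Omega\|\bigl(\|\theta-\theta_c\|+\|\omega\|\bigr).
\]
Collecting coefficients gives
\[
\frac{d}{dt}\mathcal{E} + (d_\ell - 2\varepsilon m_u)\|\omega\|^2 + \Bigl(2\varepsilon\mathcal{R}_0 a_\ell L_* N - \tfrac{a_u^2 N^2}{d_\ell}\Bigr)\|\theta-\theta_c\|^2 \leq 2\max\{\varepsilon,1\}\|\Omega\|\bigl(\|\theta-\theta_c\|+\|\omega\|\bigr),
\]
and the assumption $a_u^2 N m_u < d_\ell^2 \mathcal{R}_0 a_\ell L_*$ together with $\varepsilon$ chosen in \eqref{condi_e} makes both bracketed coefficients strictly positive, so they both dominate $C_\ell\geq 0$. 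This is exactly \eqref{energy-0}. The only delicate book-keeping step is ensuring that the constant produced by Young's inequality in Lemma \ref{lemma-energy}(i) matches the coefficient $d_\ell$ available from damping; everything else is algebraic rearrangement.
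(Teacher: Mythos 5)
Your proposal is correct and follows essentially the same route as the paper: the paper multiplies the $\omega$-equation by $2\omega_i$ and by $2\theta_i$ separately and then forms the combination (Step A) $+\,\varepsilon\times$ (Step B), which is exactly your direct differentiation of $\mathcal{E}$; the symmetrization of the interaction terms, the use of Lemma \ref{lemma-energy}(i)--(ii), the replacement $\langle\theta,\Omega\rangle=\langle\theta-\theta_c,\Omega\rangle$ via $\sum_i\Omega_i=0$, and the final coefficients $(d_\ell-2\varepsilon m_u)$ and $\bigl(2\varepsilon\mathcal{R}_0 a_\ell L_*N-\tfrac{a_u^2N^2}{d_\ell}\bigr)$ all match the paper's computation. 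No gaps.
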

\begin{proof}  The proof is divided into three steps.

$\bullet$ {\bf Step A.-} We multiply $2\omega_i$ on both sides of the second equation in
$\eqref{first-KMI}_2$, sum it over $i$, and then use the symmetry of $a_{ij}$ and  Lemma \ref{lemma-energy} to obtain
\begin{align*}
\begin{aligned}
 \frac{d}{dt}  \sum_{i=1}^N m_i\omega_i^2 &= -2 \sum_{i=1}^N d_i\omega_i^2   + 2 \sum_{i=1}^N \Omega_i \omega_i + 2 \sum_{i,j=1}^{N}a_{ij}\sin
(\theta_{j}-\theta_i)\omega_i \\
&=-2 \sum_{i=1}^N d_i\omega_i^2   + 2 \sum_{i=1}^N \Omega_i \omega_i -  \sum_{i,j=1}^{N}a_{ij}\sin
(\theta_{j}-\theta_i)(\omega_j- \omega_i)\\
&\leq -2 \sum_{i=1}^N d_i\omega_i^2  + 2 \sum_{i=1}^N \Omega_i \omega_i  +   a_{u}\sum_{(i,j)\in
\mathcal W}\Big|\sin(\theta_{j}-\theta_i)(\omega_j-\omega_i)\Big| \\
&\leq -2 \sum_{i=1}^N d_i\omega_i^2   +  2 \|\Omega\| \|\omega\| +\frac{a_u^2 N^2}{d_\ell}\|\theta - \theta_c\|^2 + d_\ell\|\omega\|^2.
\end{aligned}
\end{align*}
This yields
\bq\label{sumoveri.comm}
\frac{d}{dt}\lal M\omega,\omega\ral \leq-d_\ell \|\omega\|^2 + 2\|\Omega\|\|\omega\| + \frac{a_u^2 N^2}{d_\ell}\|\theta - \theta_c\|^2.
\eq

$\bullet$ {\bf Step B.-} We now multiply $2\theta_i$ on both sides of $\eqref{first-KMI}_2$ to obtain
\begin{align*}
\begin{aligned}
2m_i \lt( \frac{d \omega_i}{dt}   \rt) \theta_i = - d_i\frac{d}{dt} \theta_i^2 +  2 \Omega_i \theta_i
+ 2\sum_{j=1}^{N}a_{ij}\sin(\theta_j-\theta_i)\theta_i.
\end{aligned}
\end{align*}
Summing the above equality over $i$ and using the symmetry of $a_{ij}$ and Lemma \ref{lemma-energy}, we find
\begin{align}\label{est_wt}
\begin{aligned}
2 \sum_{i=1}^{N} m_i\lt( \frac{d \omega_i}{dt}\rt) \theta_i &= -\frac{d}{dt}\sum_{i=1}^N d_i\theta_i^2  +
2 \sum_{i=1}^N \Omega_i \theta_i + 2\sum_{(j,i)\in
\mathcal W}a_{ij}\sin(\theta_j-\theta_i)\theta_i\\ &= -\frac{d}{dt}\sum_{i=1}^N d_i\theta_i^2+ 2 \sum_{i=1}^N
\Omega_i \theta_i -  \sum_{(j,i)\in \mathcal W}a_{ij}\sin(\theta_j-\theta_i)(\theta_j-\theta_i) \\&= -\frac{d}{dt}\sum_{i=1}^N d_i\theta_i^2+ 2 \sum_{i = 1}^N\Omega_i (\theta_i - \theta_c)  -  \sum_{(j,i)\in \mathcal W}a_{ij}\sin(\theta_j-\theta_i)(\theta_j-\theta_i) \\
&\leq -\frac{d}{dt}\sum_{i=1}^N d_i\theta_i^2 + 2 \|\Omega\| \|\theta - \theta_c \|   - 2\mathcal{R}_0 a_{\ell} L_* N\|\theta-\theta_c\|^2,
\end{aligned}
\end{align}
where  we used
 the restriction that
\[
\sum_{i = 1}^N \Omega_i = 0.
\]
On the other hand, the term in the left hand side of relation \eqref{est_wt} can be rewritten as
\bq\label{est_wt2}
m_i\frac{d\omega_i}{dt} \theta_i=m_i \frac{d}{dt} (\omega_i\theta_i)- m_i \omega_i ^2.
\eq
Combining \eqref{est_wt} and \eqref{est_wt2}, we obtain
\begin{equation*}
\frac{d}{dt}\left( 2   \sum_{i=1}^N m_i \omega_i \theta_i +\sum_{i=1}^N d_i\theta_i^2 \right) + 2\mathcal{R}_0 a_{\ell} L_* N\|\theta-\theta_c\|^2  \leq 2\| \Omega\| \|\theta - \theta_c\| + 2 \sum_{i=1}^N m_i\omega_i^2.
\end{equation*}
Finally, we use the fact
\[
\sum_{i=1}^N m_i \omega_i^2 \leq m_u\|\omega\|^2,
\]
to conclude
\begin{align}\label{New-1}
\begin{aligned}
\frac{d}{dt}\lt( \langle D\theta, \theta\rangle  + 2 \langle M\theta, \omega \rangle\rt) + 2\mathcal{R}_0 a_{\ell} L_* N\|\theta-\theta_c\|^2 \leq 2\| \Omega\| \|\theta - \theta_c\| + 2m_u\|\omega\|^2.
\end{aligned}
\end{align}

$\bullet$ {\bf Step C.-} Taking \eqref{sumoveri.comm} + $\varepsilon \times$ \eqref{New-1} yields
$$\begin{aligned}
\frac{d}{dt} \mathcal{E}(t) + (d_\ell - 2\e m_u)&\|\omega\|^2 + \lt( 2\e \mathcal{R}_0 a_\ell L_*N - \frac{a_u^2 N^2}{d_\ell}\rt) \|\theta - \theta_c\|^2 \cr
&  \leq 2\max\{\e, 1\}\| \Omega\|\Big( \|\theta - \theta_c\| + \|\omega\|\Big).
\end{aligned}$$
Then it follows from the condition on $\e > 0$ in \eqref{condi_e} that
\[
\frac{d}{dt}\me(t) + C_\ell \md(t) \leq 2\max\{\e, 1\}\| \Omega\|\Big( \|\theta - \theta_c\| + \|\omega\|\Big), \quad \mbox{for}\,\,\,t\in [0,T_0].
\]
This is the desired inequality and the proof is completed.
\end{proof}
It follows from the definition of $\md[\theta,\omega]$ and Lemma \ref{equivalence} that
\[
\md[\theta,\omega] \leq \|\omega\|^2 +  \|\theta\|^2   \leq \frac{1}{C_0}\me[\theta,\omega],
\]
or equivalently,
\[
 C_0   \md[\theta,\omega] \leq \me[\theta,\omega].
\]
However, we can easily find that the functional $\me[\theta,\omega]$ is not bounded from above by the dissipation rate $\md[\theta,\omega]$. In the case  of uniform inertia and damping \cite{C-H-Y, C-L-H-X-Y},  
  applying a macro-micro decomposition if necessary, we can assume    $\theta_c(t)=0$   for all $t \geq 0$, which implies that along the flow \eqref{Ku-iner-net2} we have
\[
\frac{1}{2N}\sum_{1 \leq i,j \leq N}|\theta_i - \theta_j|^2 = \sum_{i=1}^N \theta_i^2, \quad \forall\, t>0.
\]
This immediately implies that $\me(t)$ is bounded from above by $\md(t)$ uniformly in time, and thus, they are equivalent.  Then we can derive a nice differential inequality on $\me(t)$ from \eqref{energy-0} which  enables us to obtain  the uniform boundedness of the temporal energy functional $\me(t)$   under suitable  initial configurations.  However, in the current case with non-uniform parameters, the average quantity $\theta_c(t)$ is not conserved. As a consequence, the dissipation $\md(t)$ does not provide a   damping effect for the energy functional $\me(t)$. In order to obtain a proper dissipation of energy, we introduce a modified   energy functional $\wt\me$ as:
\[
\wt{\me}[\theta,\omega] :=\e\sum_{i=1}^N d_i (\theta_i-\theta_c)^2+ 2\e\sum_{i=1}^N m_i (\theta_i - \theta_c)\omega_i+ \sum_{i=1}^N m_i \omega_i^2\quad \mbox{with} \quad \theta_c = \frac1N \sum_{i=1}^N \theta_i.
\]
In the lemma below, we provide some relations between $\me$ and $\wt\me$, and $\md$ and $\wt\me$.
\begin{lemma}\label{lemma_newenergy}
(1)   The functionals $\me$ and $\wt\me$ have the following relation:  \begin{align}\label{diff_et}
\begin{aligned}
\wt\me = \me - 2\e \theta_s \theta_c + \e \,tr(D)\theta_c^2 - 2\e \omega_s \theta_c \,,
\end{aligned}
\end{align} where $\theta_s$ and $\omega_s$ are given as in \eqref{vs}. \newline
(2)  The  functional $\wt\me$ and the dissipation rate $\md$ are equivalent:
 \bq\label{eqv_ed}
 C_0  \md[\theta,\omega] \leq \wt\me[\theta,\omega] \leq C_1 \md[\theta,\omega], \quad \forall\,\theta, \omega\in \mathbb R^N,
\eq  where $C_0$ and $C_1$ are positive constants given as in Lemma \ref{equivalence}.
 \end{lemma}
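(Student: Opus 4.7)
\smallskip

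\noindent\textbf{Proof proposal.} For part (1), the plan is a direct bookkeeping of square expansions. Writing out the first two pieces of $\widetilde{\mathcal{E}}[\theta,\omega]$ I would compute
\[
\sum_{i=1}^N d_i(\theta_i - \theta_c)^2 = \sum_{i=1}^N d_i\theta_i^2 - 2\theta_c\sum_{i=1}^N d_i\theta_i + \theta_c^2\sum_{i=1}^N d_i = \sum_{i=1}^N d_i\theta_i^2 - 2\theta_s\theta_c + tr(D)\theta_c^2,
\]
and similarly
\[
\sum_{i=1}^N m_i(\theta_i-\theta_c)\omega_i = \sum_{i=1}^N m_i\theta_i\omega_i - \theta_c\sum_{i=1}^N m_i\omega_i = \sum_{i=1}^N m_i\theta_i\omega_i - \omega_s\theta_c,
\]
using the definitions of $\theta_s$ and $\omega_s$ in \eqref{vs}. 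Multiplying the first identity by $\varepsilon$, the second by $2\varepsilon$, adding $\sum_i m_i\omega_i^2$ and recognizing the definition \eqref{energy-1} of $\mathcal{E}$ gives exactly the identity \eqref{diff_et}. This part is purely algebraic and should take only a few lines.

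For part (2), the key observation is that $\widetilde{\mathcal{E}}$ is nothing but $\mathcal{E}$ evaluated on the shifted configuration: if we set $\widetilde\theta := \theta - \theta_c\mathbf{1}$, where $\mathbf{1} = (1,\dots,1)$, then comparing \eqref{energy-1} with the definition of $\widetilde{\mathcal{E}}$ yields
\[
\widetilde{\mathcal{E}}[\theta,\omega] = \mathcal{E}[\widetilde\theta,\omega] = \varepsilon\langle D\widetilde\theta,\widetilde\theta\rangle + 2\varepsilon\langle M\widetilde\theta,\omega\rangle + \langle M\omega,\omega\rangle.
\]
Now I can invoke Lemma \ref{equivalence} applied to the pair $(\widetilde\theta,\omega) \in \mathbb R^N \times \mathbb R^N$, which is valid under the hypothesis $\varepsilon \in (0,\,d_\ell/(2m_u))$ already enforced in our setting. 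This immediately gives
\[
C_0\bigl(\|\widetilde\theta\|^2 + \|\omega\|^2\bigr) \leq \widetilde{\mathcal{E}}[\theta,\omega] \leq C_1\bigl(\|\widetilde\theta\|^2 + \|\omega\|^2\bigr),
\]
with the same constants $C_0,C_1$. Since $\|\widetilde\theta\|^2 = \|\theta - \theta_c\|^2$ and $\mathcal{D}[\theta,\omega] = \|\omega\|^2 + \|\theta-\theta_c\|^2$, this is precisely \eqref{eqv_ed}.

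I do not anticipate a real obstacle here: both assertions follow once one notes the shift-invariance of the quadratic form underlying $\mathcal{E}$, reducing (2) to Lemma \ref{equivalence} with $\theta$ replaced by $\theta-\theta_c\mathbf{1}$. The only mild point to be careful about is that the relation \eqref{diff_et} is stated for $\theta\in\mathbb R^N$ (not on the torus), which is consistent with Remark \ref{remark1}: the whole framework treats the oscillators as a dynamical system on $\mathbb R^N$, so $\theta_c$, $\theta_s$, $\omega_s$ are just real numbers and no periodicity issue arises.
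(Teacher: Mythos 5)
Your proposal is correct and follows essentially the same route as the paper: part (1) is the same direct expansion of the squares using the definitions of $\theta_s$ and $\omega_s$, and part (2) is exactly the paper's argument of applying Lemma \ref{equivalence} with $\theta$ replaced by $\theta-\theta_c$, noting that $\wt\me[\theta,\omega]=\me[\theta-\theta_c,\omega]$. Your remark that the condition $\varepsilon\in(0,d_\ell/(2m_u))$ is needed (and is enforced by ${\bf (H3)}$) is a correct and slightly more careful reading than the paper's terse statement.
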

\begin{proof}
(1)  The relation between $\me$ and $\wt\me$ immediately  follows from the definition of $\wt\me$:
\begin{align*}
\begin{aligned}
\wt\me &= \me - 2\e\sum_{i=1}^N d_i \theta_i \theta_c + \e \sum_{i=1}^N d_i \theta_c^2 - 2\e \sum_{i=1}^N m_i \omega_i \theta_c\cr
&= \me - 2\e \theta_s \theta_c + \e \,tr(D)\theta_c^2 - 2\e \omega_s \theta_c\,.
\end{aligned}
\end{align*}
(2)  Replacing  the term $\theta$ by $\theta-\theta_c$ in Lemma \ref{equivalence} yields the desired estimate
\begin{equation*}\label{est_equiv}
C_0 (\|\theta - \theta_c\|^2+\|\omega\|^2) \leq \wt\me[\theta, \omega] \leq C_1 (\|\theta - \theta_c\|^2+\|\omega\|^2).
\end{equation*}
\end{proof}
We now present the time-evolution of the modified energy functional
\[
\wt\me(t):=\wt\me[\theta(t),\omega(t)].
\]
Before we proceed, we first mention an   conservation property which is   important  in the  upcoming  estimate.
   {\begin{lemma}\label{remarksum0-1}
The sum of weighted average   is conserved in time:
\begin{equation}\label{eqderive0}
\dot\theta_s+\dot\omega_s=0.
\end{equation}
\end{lemma}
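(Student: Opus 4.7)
The plan is to differentiate the definitions of $\theta_s$ and $\omega_s$ in \eqref{vs} and exploit the three structural features of the reduced system \eqref{Ku-iner-net2}: the identification $\dot\theta_i=\omega_i$, the symmetry $a_{ij}=a_{ji}$, and the normalization $\sum_i\Omega_i=0$. These are exactly the ingredients needed to make the forcing and the interaction sums cancel, so no refined estimates will be required.

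First I would write $\dot\theta_s=\sum_{i=1}^N d_i\dot\theta_i=\sum_{i=1}^N d_i\omega_i$, which is immediate from $\eqref{first-KMI}_1$. Next I would differentiate $\omega_s=\sum_i m_i\omega_i$ and substitute $m_i\dot\omega_i$ from $\eqref{first-KMI}_2$ (equivalently from \eqref{Ku-iner-net2}) to get
\[
\dot\omega_s=\sum_{i=1}^N\Bigl(-d_i\omega_i+\Omega_i+\sum_{j=1}^N a_{ij}\sin(\theta_j-\theta_i)\Bigr).
\]

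Now I would dispose of the last two terms on the right-hand side. The sum $\sum_i\Omega_i$ vanishes by the reduction assumption \eqref{eqsum0} built into \eqref{Ku-iner-net2} (cf.\ Remark \ref{remark2}). For the coupling term, I would interchange the dummy indices $i\leftrightarrow j$ in the double sum and use $a_{ij}=a_{ji}$ together with the oddness of $\sin$: this gives $\sum_{i,j}a_{ij}\sin(\theta_j-\theta_i)=-\sum_{i,j}a_{ij}\sin(\theta_j-\theta_i)$, so the sum equals zero. What remains is $\dot\omega_s=-\sum_i d_i\omega_i=-\dot\theta_s$, and adding gives \eqref{eqderive0}.

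There is no genuine obstacle here; the only point requiring a bit of care is keeping track of the two separate cancellations (antisymmetric interaction and zero-mean natural frequencies), both of which are hypotheses baked into the system \eqref{Ku-iner-net2}. The identity is the discrete analogue of a conserved momentum for the damped oscillator network and will be used in the subsequent estimate of $\wt{\me}(t)$ precisely to control the cross terms $\theta_s\theta_c$ and $\omega_s\theta_c$ that appear in \eqref{diff_et}.
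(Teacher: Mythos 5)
Your proof is correct and follows exactly the argument the paper compresses into one line: differentiate $\theta_s$ and $\omega_s$, use $\sum_i\Omega_i=0$, and cancel the coupling sum by the symmetry of $a_{ij}$ and oddness of $\sin$. You have simply written out the details that the paper leaves implicit.
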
}
\begin{proof}
This immediately follows from  \eqref{Ku-iner-net2}. In particular, here we used the restriction $\sum_{i=1}^N \Omega_{i}=0$.
\end{proof}
\begin{proposition}\label{prop_final}
 Let $D_0\in (0,\pi)$ and $\{\theta_i\}_{i=1}^N$ be any smooth solution to the system \eqref{Ku-iner-net2}.
Suppose that
 \begin{equation*} 
 a_u^2N^2(2m_u+\lambda)<d_\ell^2(2\mathcal R_0a_\ell L_*N-\lambda) \quad \mbox{with} \quad \lambda=  \frac{\sqrt{tr({\hat D}^2)}}{\sqrt{N}} + \frac{2\sqrt{tr({\hat M}^2)}}{\sqrt{N}},
 \end{equation*}
and
\begin{equation}\label{assumD}
\max_{t\in [0,T_0]} \max_{1 \leq i, j \leq N} |\theta_i(t) - \theta_j(t) | \leq D_0,
\end{equation}
for some $T_0 > 0$.
Then, for any  $\varepsilon$ satisfying
\begin{equation*}
\frac{a_{u}^{2}N^2}{d_\ell(2\mathcal{R}_0a_\ell L_*N-\lambda)
}<\varepsilon<\frac{d_\ell}{2m_u+\lambda},
\end{equation*}
we have
\bq\label{ineq_main}
\frac{d}{dt}\wt\me(t) + \wt C_\ell \md(t) \leq \frac{2\sqrt{2}\max\{\e, 1\}\| \Omega\|}{\sqrt{C_0}} \sqrt{\wt\me(t)}, \quad \mbox{for}\,\,\,t\in [0, T_0],
\eq
where $\wt C_\ell$ is a positive constant given by
$
\wt C_\ell := C_\ell - \e\lambda.
$
Moreover, we have  \begin{equation}\label{ineqmain1}
\frac{d}{dt} \wt \me(t) + \frac{\wt C_\ell}{C_1} \wt\me(t) \leq \frac{2\sqrt{2}\max\{\e, 1\}\| \Omega\|}{\sqrt{C_0}} \sqrt{\wt\me(t)}, \quad \mbox{for}\,\,\,t\in [0, T_0].
\end{equation}
\end{proposition}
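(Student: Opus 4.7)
The plan is to derive the differential inequality for the modified energy $\wt\me$ directly from the already-established inequality for $\me$ in Proposition \ref{EnergyEstimateLemma}, via the explicit algebraic relation \eqref{diff_et}. Differentiating that relation along the flow \eqref{first-KMI} and collecting terms yields
\begin{equation*}
\frac{d}{dt}\wt\me = \frac{d}{dt}\me - 2\e(\dot\theta_s + \dot\omega_s)\theta_c + 2\e\bigl(tr(D)\theta_c - \theta_s - \omega_s\bigr)\dot\theta_c.
\end{equation*}
The middle term vanishes identically by Lemma \ref{remarksum0-1}, whose proof was precisely where the normalization $\sum_{i=1}^N \Omega_i=0$ was consumed. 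What remains is a single correction involving $\dot\theta_c = \frac{1}{N}\sum_{i=1}^N \omega_i$ together with the discrepancies $tr(D)\theta_c - \theta_s$ and $\omega_s$.

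The technical heart of the argument is to rewrite these discrepancies so that the correction can be absorbed into $\e\lambda\,\md$. Using $\sum_{i=1}^N(\theta_i-\theta_c)=0$ and $\sum_{i=1}^N \hat d_i=0$ one obtains $\theta_s - tr(D)\theta_c = \sum_{i=1}^N \hat d_i(\theta_i-\theta_c)$. Decomposing $\omega_i = \tilde\omega_i + \dot\theta_c$ with $\sum_{i=1}^N\tilde\omega_i = 0$ gives $\omega_s = \sum_{i=1}^N \hat m_i\tilde\omega_i + tr(M)\dot\theta_c$, where the resulting piece $-2\e\,tr(M)\dot\theta_c^2$ enters with a favorable sign and can simply be dropped. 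Cauchy--Schwarz in the $\hat d_i$ and $\hat m_i$ slots, the elementary bound $|\dot\theta_c|\leq \|\omega\|/\sqrt{N}$, the trivial inequality $\|\tilde\omega\|\leq \|\omega\|$, and Young's inequality then produce
\begin{equation*}
2\e\bigl|tr(D)\theta_c - \theta_s - \omega_s\bigr|\cdot|\dot\theta_c| \leq \e\lt(\frac{\sqrt{tr(\hat D^2)}}{\sqrt{N}} + \frac{2\sqrt{tr(\hat M^2)}}{\sqrt{N}}\rt)\md = \e\lambda\,\md.
\end{equation*}

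Combining this with Proposition \ref{EnergyEstimateLemma} immediately gives, on $[0,T_0]$,
\begin{equation*}
\frac{d}{dt}\wt\me(t) + (C_\ell - \e\lambda)\,\md(t) \leq 2\max\{\e,1\}\|\Omega\|\bigl(\|\theta-\theta_c\|+\|\omega\|\bigr),
\end{equation*}
so the damping coefficient on the left is exactly $\wt C_\ell$; its strict positivity under the admissible $\e$-range is an elementary algebraic consequence of ${\bf (H2)}$. For the source term on the right, the bound $\|\theta-\theta_c\|+\|\omega\| \leq \sqrt{2}\sqrt{\md}$ together with $\md \leq \wt\me/C_0$ from \eqref{eqv_ed} produces the $\sqrt{\wt\me}$ factor of \eqref{ineq_main}. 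The second inequality \eqref{ineqmain1} then follows at once from \eqref{ineq_main} by replacing $\wt C_\ell\,\md$ with $(\wt C_\ell/C_1)\wt\me$, justified by the complementary half of \eqref{eqv_ed}.

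The point that will require the most care is the decomposition of $\omega_s\dot\theta_c$: one has to recognize that the portion proportional to $tr(M)\dot\theta_c^2$ enters with a helpful sign and can be discarded, whereas the genuine error $\sum_{i=1}^N \hat m_i\tilde\omega_i\dot\theta_c$ must be bounded without ever involving the mean $tr(M)/N$. It is precisely by routing the estimate through the mean-subtracted quantities $\tilde\omega_i$ and $\theta_i-\theta_c$ that only $\sqrt{tr(\hat M^2)}$ and $\sqrt{tr(\hat D^2)}$, rather than full norms of $M$ and $D$, appear in $\lambda$, so that ${\bf (H2)}$ really is a genuine relaxation of the uniform-parameter hypothesis.
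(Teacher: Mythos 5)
Your proposal is correct and follows essentially the same route as the paper: differentiate the relation \eqref{diff_et}, kill the $(\dot\theta_s+\dot\omega_s)\theta_c$ term via Lemma \ref{remarksum0-1}, rewrite the remaining correction through the fluctuations $\hat d_i,\hat m_i$ (your $\sum_i\hat m_i\tilde\omega_i$ equals the paper's $\sum_i\hat m_i\omega_i$ since $\sum_i\hat m_i=0$), drop the favorable $-2\e\,tr(M)\dot\theta_c^2$ term, and absorb the rest into $\e\lambda\,\md$ by Cauchy--Schwarz and Young before invoking \eqref{eqv_ed} for the source term and for \eqref{ineqmain1}. No substantive differences.
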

\begin{proof} It follows from  Proposition \ref{EnergyEstimateLemma} and \eqref{diff_et} in Lemma \ref{lemma_newenergy}   that $\wt\me$ satisfies
\[
\frac{d}{dt}\wt \me(t) + C_\ell \md(t) \leq  \underbrace{\frac{d}{dt} \lt( \e \,tr(D)\theta_c^2  - 2\e \theta_s \theta_c - 2\e \omega_s \theta_c\rt)}_{=:I} + \underbrace{2\max\{\e, 1\}\| \Omega\|\lt( \|\theta - \theta_c\| + \|\omega\|\rt)}_{=: J}.
\]
Using \eqref{eqderive0}, we rewrite $I$ as
$$\begin{aligned}
I &= 2\e\, tr (D)\theta_c \dot\theta_c  - 2\e \dot\theta_s \theta_c - 2\e \theta_s \dot\theta_c - 2\e \dot\omega_s \theta_c - 2\e \omega_s \dot\theta_c\cr
&= 2\e\, tr (D)\theta_c \dot\theta_c  - 2\e \theta_s \dot\theta_c - 2\e \omega_s \dot\theta_c \quad \lt(\because \dot\theta_s + \dot\omega_s = 0\rt)\cr
&= -2\e\dot\theta_c \sum_{i=1}^N d_i(\theta_i - \theta_c) -2\e\omega_s \dot\theta_c \quad \lt(\because \theta_s = \sum_{i=1}^N d_i(\theta_i - \theta_c) + tr(D)\theta_c\rt)\cr
&= -2\e\omega_c \sum_{i=1}^N d_i(\theta_i - \theta_c) -2\e\omega_s \omega_c \quad \lt( \because \dot\theta_c = \omega_c := \frac1N\sum_{i=1}^N \omega_i\rt).
\end{aligned}$$
Note that
\[
\sum_{i=1}^N d_i(\theta_i - \theta_c) = \sum_{i=1}^N \hat d_i(\theta_i - \theta_c) \quad \mbox{and} \quad \omega_s = \sum_{i=1}^N \hat m_i \omega_i + tr(M) w_c.
\]
This yields
$$\begin{aligned}
I  &= -2\e\omega_c \sum_{i=1}^N \hat d_i(\theta_i - \theta_c) -2\e\lt(\sum_{i=1}^N \hat m_i \omega_i + tr(M) w_c\rt) \omega_c \cr
&\leq -2\e\omega_c \sum_{i=1}^N \hat d_i(\theta_i - \theta_c) -2\e \omega_c \sum_{i=1}^N \hat m_i \omega_i.
\end{aligned}$$
On the other hand, we find
$$\begin{aligned}
&\lt|2\e\omega_c \sum_{i=1}^N \hat d_i(\theta_i - \theta_c) +2\e \omega_c \sum_{i=1}^N \hat m_i \omega_i\rt|\cr
&\quad =\lt| \frac{2\e}{N}\lt(\sum_{i=1}^N \omega_i \rt)\lt(\sum_{i=1}^N \hat d_i(\theta_i - \theta_c) \rt)+ \frac{2\e}{N}\lt(\sum_{i=1}^N \hat m_i \omega_i \rt)\lt(\sum_{i=1}^N \omega_i\rt)\rt|\cr
&\quad\leq \frac{2\e}{N}\sqrt{N}\|\omega\| \sqrt{tr({\hat D}^2)}\|\theta - \theta_c\| + \frac{2\e}{N}\sqrt{tr({\hat M}^2)} \|\omega\| \sqrt{N}\|\omega\|\cr
&\quad= \frac{2\e}{\sqrt{N}}\sqrt{tr ({\hat D}^2)}\|\omega\|\|\theta  - \theta_c\| + \frac{2\e \sqrt{tr({\hat M}^2)}}{\sqrt{N}}\|\omega\|^2\cr
&\quad\leq \e\lt(\frac{\sqrt{tr({\hat D}^2)}}{\sqrt{N}}\|\omega\|^2 +\frac{\sqrt{tr({\hat D}^2)}}{\sqrt{N}} \|\theta-\theta_c\|^2\rt)+ \frac{2\e\sqrt{tr({\hat M}^2)}}{\sqrt{N}} \|\omega\|^2 \cr
&\quad\leq \e\lt(\frac{\sqrt{tr({\hat D}^2)}}{\sqrt{N}} + \frac{2\sqrt{tr({\hat M}^2)}}{\sqrt{N}}\rt) \md[\theta,\omega].
\end{aligned}$$
Thus, we have $$I\leq \e\lt(\frac{\sqrt{tr({\hat D}^2)}}{\sqrt{N}} + \frac{2\sqrt{tr({\hat M}^2)}}{\sqrt{N}}\rt) \md[\theta,\omega]. $$
For the estimate of $J$, we obtain
$$\begin{aligned}
J=2\max\{\e, 1\}\| \Omega\|\lt( \|\theta - \theta_c\| + \|\omega\|\rt) &\leq 2\sqrt{2}\max\{\e, 1\}\| \Omega\|\sqrt{ \|\theta - \theta_c\|^2 + \|\omega\|^2}\cr
& \leq \frac{2\sqrt{2}\max\{\e, 1\}\| \Omega\|}{\sqrt{C_0}} \sqrt{\wt\me(t)},
\end{aligned}$$
where we used the  elementary relation $a + b \leq \sqrt{2}\sqrt{a^2 + b^2}$ for $a,b \geq 0$ and Lemma \ref{lemma_newenergy} (2).
We now combine the above estimates  for $I$ and $J$ to see that, for $t\in[0, T_0]$,
\[
\frac{d}{dt}\wt\me(t) + \lt(C_\ell - \e\lambda \rt)\md(t) \leq \frac{2\sqrt{2}\max\{\e, 1\}\| \Omega\|}{\sqrt{C_0}} \sqrt{\wt\me(t)}.
\]
This is the desired inequality \eqref{ineq_main}.
Finally,  the last inequality \eqref{ineqmain1} immediately follows from \eqref{eqv_ed} and \eqref{ineq_main}.
\end{proof}

%
%
\subsection{Proof of Theorem \ref{thm4}}\label{sec_main}
For the sake of notational simplicity, we set
\[
y(t) := \sqrt{\wt\me(t)} \quad t \geq 0.
\]
Define
 \[
\mathcal{T} := \lt\{ T \in \mathbb{R}_{+} : y(t)<\frac{\sqrt{C_0}}{2}D_0, \quad \forall\,
t \in [0,T) \rt\}, \quad   {T}^* := \sup \mathcal{T}.
\]
Note that by the assumption \eqref{assumpB},
\[
y(0) < \frac{\sqrt{C_0}}{2}D_0.
\]
  Due to the continuity of $y$,    there exists a positive constant $T>0$ such that $T\in \mathcal T$.
We now claim that \begin{equation}\label{claim3}T^*=\infty.\end{equation}
Suppose the opposite, i.e., $T^*$ is finite. Then, we should have
\begin{equation}
\label{eitheror}y(T^*)=\frac{\sqrt{C_0}}{2}D_0.
\end{equation}
Note that on the interval $[0,T^*)$,    we can derive that
$$\begin{aligned}
\max_{1 \leq i,j \leq N}|\theta_i(t)-\theta_j(t)|^2 &\leq 4\max_{1 \leq i \leq N}|\theta_i(t) - \theta_c(t)|^2 \leq 4\sum_{i=1}^N|\theta_i(t) - \theta_c(t)|^2\cr
&  \leq 4\md(t)\leq \frac{4}{C_0}\wt\me(t)\cr
& \leq \frac{4}{C_0}\lt( \frac{\sqrt{C_0}}{2}D_0\rt)^2 = D_0^2,
\end{aligned}$$
which means that the condition \eqref{assumD} is fulfilled, and then Proposition \ref{prop_final}  can be applied. 
 By \eqref{ineqmain1} we have
\begin{equation}\label{EnergyMain}
\frac{d y}{dt} \leq \frac{\sqrt{2}\max\{\e, 1\}\| \Omega\|}{\sqrt{C_0}} - \frac{\wt C_\ell}{2C_1} y, \quad \mbox{for} \quad t \in [0,T^*].
\end{equation}
Note that the solution $y(t)$ to the system \eqref{EnergyMain} satisfies
\[
y(T^*) \leq \max \lt\{ y(0), \frac{2\sqrt{2}C_1\max\{\e, 1\}\|\Omega\|}{\wt C_\ell \sqrt{C_0}}\rt\} <  \frac{\sqrt{C_0}}{2}D_0,
\]
where we used the assumption  \eqref{assumpB}. This   contradicts   \eqref{eitheror} and    the claim \eqref{claim3} is proved, i.e.,
 \[
\wt \me (t) <\frac{C_0}{4}D_0^2, \qquad \forall~ t\geq0.
\]
This implies that
\begin{equation}\label{diffbound}
\max_{1 \leq i,j \leq N}|\theta_i(t)-\theta_j(t)|^2\leq 4\md (t) \leq \frac{4}{C_0}\wt\me(t)<D_0^2, \qquad \forall~ t\geq 0.
\end{equation}
On the other hand, we  recall the relation \eqref{eqderive0} to get
\[
\omega_s(t) + \theta_s(t) = \omega_s(0) + \theta_s(0), \qquad \forall~t \geq 0.
\]
This means that
\[
|\theta_s(t)| \leq |\omega_s(t) + \theta_s(t)| + |\omega_s(t)| = |\omega_s(0) + \theta_s(0)| + |\omega_s(t)|,\quad  \forall \,t\geq0.
\]
We now use the fact that $\omega(\cdot) \in    L^\infty(\mathbb R^+,\mathbb R^N)$ 
in Lemma \ref{lemmafreqbdd} to deduce
\begin{equation}\label{weightsumbdd}
|\theta_s(t)| \leq K_0, \quad \forall \, t\geq0,
\end{equation}
for some positive constant $K_0$.  Combining the relations  \eqref{diffbound} and \eqref{weightsumbdd}, we see that the trajectory  $\theta(\cdot)$ is  bounded as a function in time $t$.
 Thus, we obtain  $\theta(\cdot)\in W^{1,\infty}(\mathbb R^+, \mathbb R^N)$ (see Remark \ref{remark1}), since   $\dot\theta(\cdot)$
is bounded by Lemma \ref{lemmafreqbdd}.  Finally, we apply Proposition \ref{convergencethm}
 to find that the system \eqref{Ku-iner-net2} asymptotically attains the phase-locked states.
 This completes the proof.
\begin{remark}
If, in addition,  $D_0\leq\pi/2$, then the emergent phase-locked state must be confined in an arc with length less than $\pi/2$. Thus, the result in \cite[Theorem 3.1]{L-X-Y1} holds.  Furthermore,  by appealing to the  approach  in \cite{L-X-Y1} (see the Step 2 in the proof of Theorem 2.1),  we can derive that the convergence to the phase-locked states is exponentially fast.
 \end{remark}

\begin{remark} In our approach, the function $\wt\me$ is not a physical energy, so it can be regarded as a virtual energy.
 This virtual  energy functional is different from that in \cite{C-L-H-X-Y} where the case of uniform inertia and damping was considered. Actually, the uniformity implies some nice property so that   the  mean value of phases can be assumed to be zero all the time. This played important roles in that analysis. In this work, this property is absent due to the non-uniform parameters; thus,  we construct  the new energy functional $\wt\me$ to overcome this difficulty.
\end{remark}

%
%
%
%
%
%
%

\section{Numerical simulations}
\setcounter{equation}{0}
  {In ${\bf (H2)}$ and ${\bf (H3)}$, the parameters $D_0$ and $\varepsilon$  are chosen from  some open intervals, then the estimated region of attraction is different upon  different choices. 
As we see in \eqref{diffbound}, the constant $D_0$ is actually the range of phases for the system.
In the statement of Theorem \ref{thm4}, it is pre-assigned in $(0,\pi)$ which needs to fit \eqref{assume}.  Its value affects the admissible range of $\e$, other constants and the right hand side of \eqref{assumpB}. On the other hand, the choice of $\varepsilon$ affects the energy functional $\tilde \me$ and other constants.  So, it would be interesting to investigate the region of attraction with different range of phases energy functional and different energy functional.   
In this section, we will do some simulations and illustrate the influence of $D_0$ and $\varepsilon$ on the estimated region, for a special setting. The conservativeness of our estimate is also illustrated.}

   {Our  numerical simulations will be carried out by using Matlab.   In order to show the region of attraction intuitively in a plane, we will consider the simple case consists of two oscillators.  Then, the dynamics is given by
\begin{align*}
\begin{aligned}
m_1\ddot{\theta}_1+d_1\dot{\theta}_{1}& = \Omega_{1} +   a_{12} \sin(\theta_{2} - \theta_{1}),\\ 
m_2\ddot{\theta}_2+d_2\dot{\theta}_{2}& = \Omega_{2} +   a_{21} \sin(\theta_{1} - \theta_{2}).
\end{aligned}
\end{align*}  To reduce the dimension of variables, we assume that  the initial frequencies are determined by initial phases in the following way:
\[
d_1\omega_1(0)=\Omega_1+a_{12}\sin(\theta_2(0)-\theta_1(0)), \quad d_2\omega_2(0)=\Omega_2+a_{21}\sin(\theta_1(0)-\theta_2(0)).
\]
Note that the dampings can be inhomogeneous, thus the two-oscillator system cannot be written as a single equation of $\theta:=\theta_1-\theta_2$.
We set  the parameters  $m_i$ and $d_i$  by using  random data which are  uniformly distributed in the following  way:
\[m_i\in (0.10, 0.15), \quad d_i\in(0.30, 0.40),\]
 and set the symmetric coupling strength as $a_{12}=a_{21}=0.2.$ We have $L_*=1$.}

  \subsection{Varying $\e$.}   {In this part,  we set   the range of phases  as \[D_0=\pi/4\] which fits the condition \eqref{assume}. Then we can calculate the parameters $\mathcal R_0, \lambda,  D_0, C_0, C_1, C_l, \tilde C_l$, and the interval for the possible location of  the positive coefficient $\varepsilon$ for the energy functional \[
\wt{\me}[\theta,\omega] :=\e\sum_{i=1}^2  d_i (\theta_i-\theta_c)^2+ 2\e\sum_{i=1}^2 m_i (\theta_i - \theta_c)\omega_i+ \sum_{i=1}^2  m_i \omega_i^2\quad \mbox{with} \quad \theta_c = \frac12 \sum_{i=1}^2  \theta_i.
\]
The natural frequencies $\Omega_i,i=1,2$ are randomly  chosen as   sufficient small data which have mean 0  and satisfy the condition \eqref{assumpB}.
Then  we can finally illustrate the region of attraction in $[0,\pi] \times [0,\pi]$, which is shown in Fig. 1 (a). The region of attraction is registered by the dark color.  For different choices of  admissible coefficients $\varepsilon$ satisfying {\bf (H3)}, we illustrate the boundary of the region in Fig. 1 (b). The different choices of $\e$ are registered by the different colors.  
We observe that the smaller choice of $\varepsilon$ produces a relative larger region of attraction.}


\begin{figure}
  \centering
  \subfigure[]{\includegraphics[scale=0.22]{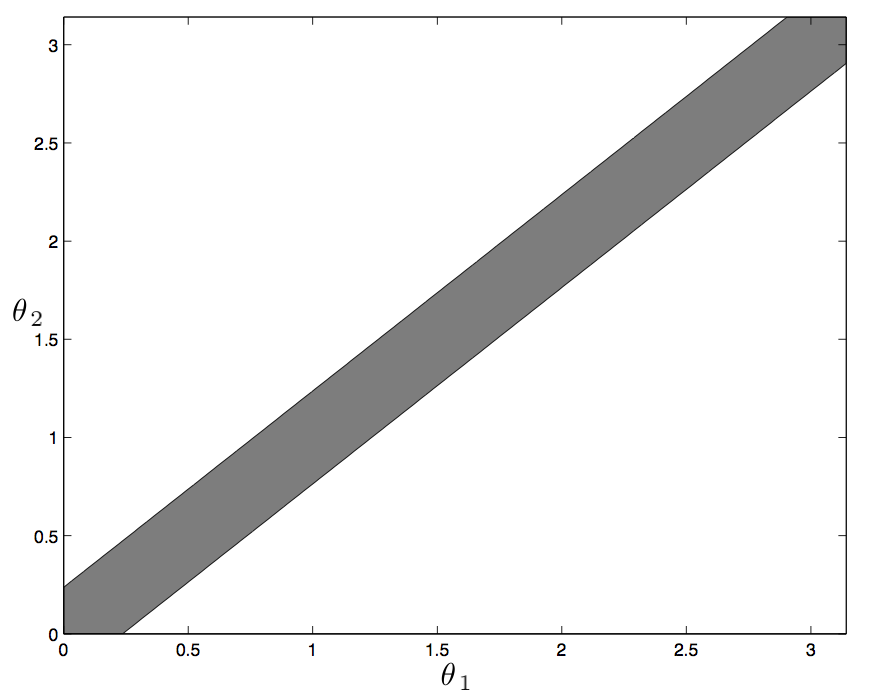}
  }
  \subfigure[]{
    \includegraphics[scale=0.22]{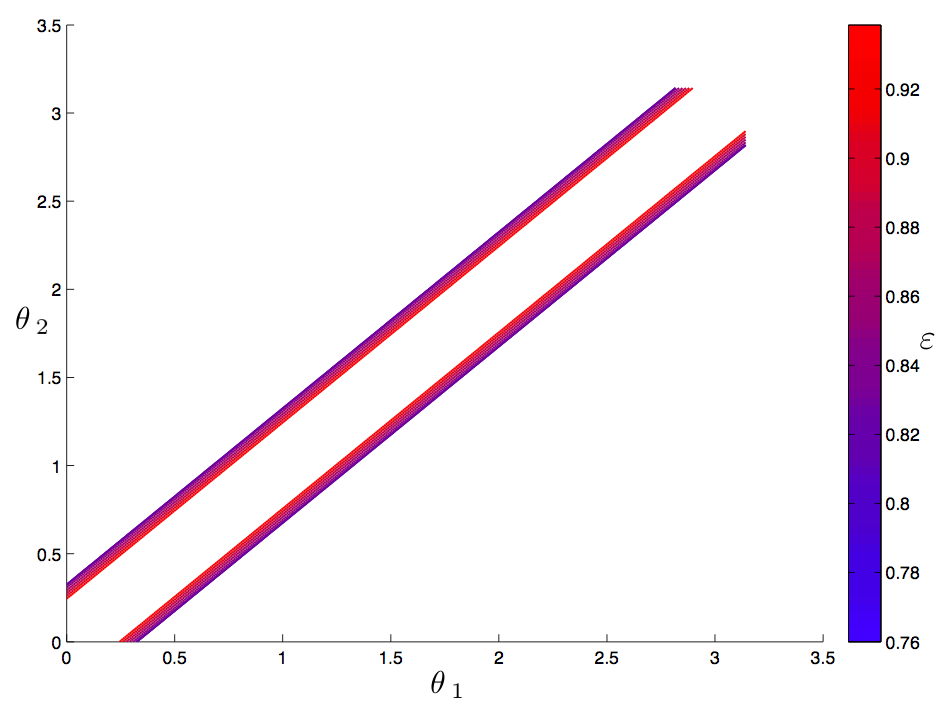}
    }
  \caption{(a): The region of attraction for a special choice of admissible $\e$.  (b): The boundary of region of attraction depending on $\e$. }
\end{figure}

\subsection{Varying $D_0$}
   {In Fig. 2, we try to illustrate the estimated  region of attraction for different  choices of constant $D_0\in (0,\pi)$, which needs to fit the condition \eqref{assume}.  We choose 18 numbers in $(0,\pi)$:
 \[
 \frac{\pi}{19},\, \frac{2\pi}{19},\,\frac{3\pi}{20},\, \dots, \frac{18\pi}{19},
 \]
 and use the restriction \eqref{assume} to find out the admissible ones. Simple computation indicates that all numbers in $[ \frac{\pi}{19},  \frac{9\pi}{19}]$ fit the condition \eqref{assume}.
 Then  we carry out the simulation using the admissible ones.    In view of Fig. 1, we choose $\e$  as  the smallest one   among the admissible choices of $\e$.  Fig. 2 shows the result depending on the values of $D_0$, which indicates that the larger choice of $D_0$ produces a larger region.}

\begin{figure}
  \centering
  \subfigure[]{\includegraphics[scale=0.22]{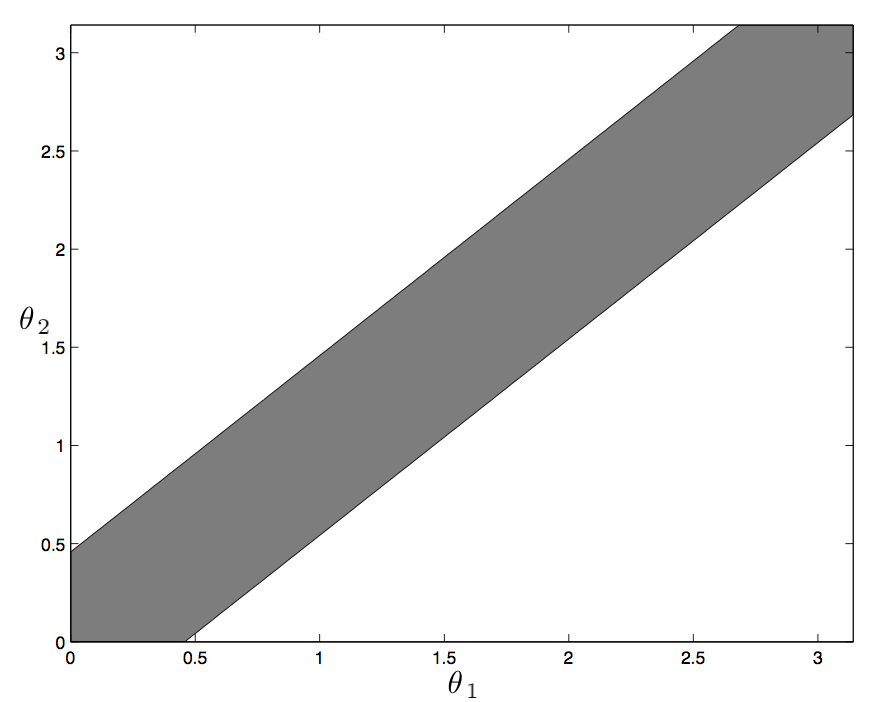}
  }
  \subfigure[]{
    \includegraphics[scale=0.3]{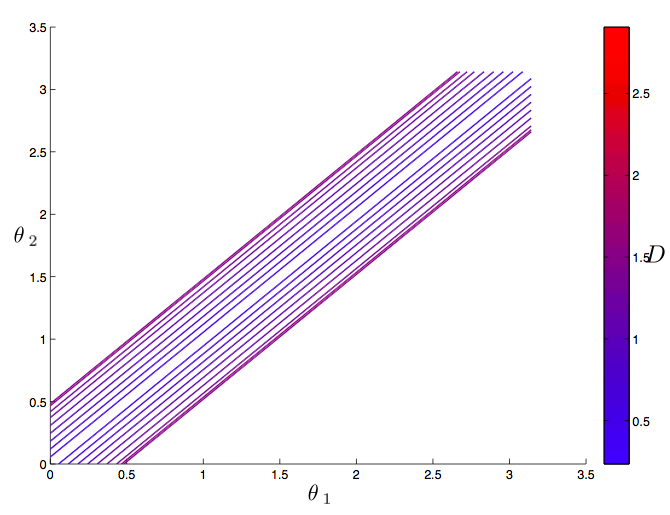}
    }
  \caption{(a): The region of attraction for a special choice of admissible $D_0$.  (b): The boundary of region of attraction depending on $D_0$. }
\end{figure}

\subsection{Conservativeness}
  {We acknowledge that our result is conservative in the sense that the  framework is only sufficient for the phase locking behavior, for example, the  presented estimate on  the region of attraction.  We   do some simulations, see Fig. 3, to illustrate this. We use the same parameters as in the simulation for Fig. 1. For the initial phases, we chose $(\theta_1,\theta_2) = (3,1)$ which does not fit  any region   shown in Figs. 1-2.  The employed numerical method is a classical fourth order Runge-Kutta one using the built-in {\it ode45} Matlab command. The simulation in Fig. 3 shows that the frequencies are   synchronized at an exponential rate, so the phase converges  to a phase-locked state. This suggests a future problem to improve the estimate of the  region of attraction.}

\begin{figure}
  \centering
  \subfigure[]{\includegraphics[scale=0.5]{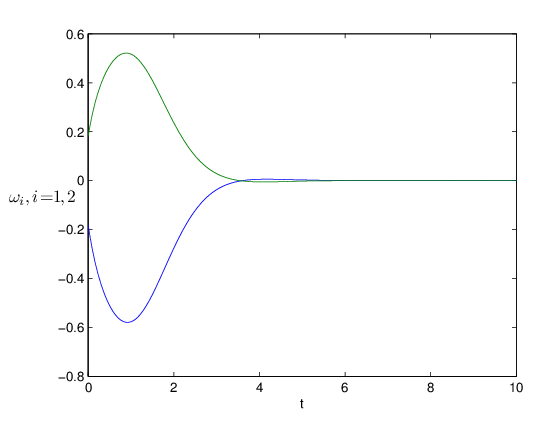}
  }
  \subfigure[]{
    \includegraphics[scale=0.5]{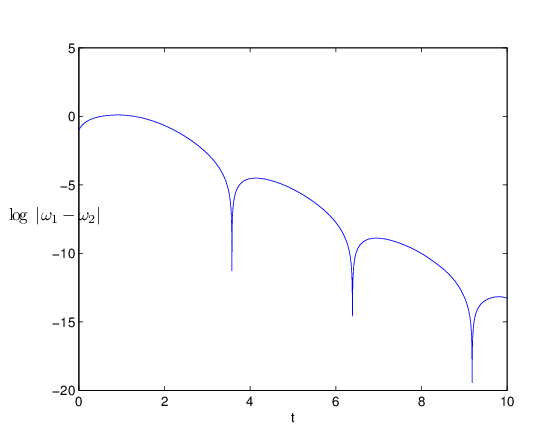}
    }
  \caption{(a): The evolution of $\omega_i,i=1,2$. (b): The evolution of $\log   |\omega_1 - \omega_2|$.}
\end{figure}

%
%
%
%
%
%
%

\section{Conclusion}
\setcounter{equation}{0} In this paper, we studied the synchronization and transient stability of the power grids on connected networks with inhomogeneous dampings. As mentioned before, the central problem  for the transient stability is to identify the region of attraction of the synchronous states, which was considered actually very rare. In \cite{L-X-Y1},  a special case of the power network model was considered: the damping is homogeneous and the underlying graph has a diameter less than or equal to 2. This is very strict in real applications for power grids.  {Moreover}, the analysis  in \cite{L-X-Y1}, based on the phase diameter,  heavily relied on these assumptions and cannot be extended to general cases.  In the present work, we employed the energy method to overcome the difficulty and obtained the desired estimate for this problem in the general case. 
  {Simulations  are provided to give some comparison on the different choices of parameters $D_0$ and $\e$, for a special setting of the simple network with two oscillators.
In view of the potential application in engineering, the quantitative improvement of the  estimate,  including    the parametric condition and  the region of attraction, would be an interesting future problem. The heterogeneity of the parameters and/or general connectivity mean that the method of studying the phase difference cannot work well, while our estimate gives a way   to overcome these difficulties. It is reasonable to expect a refined energy functional and a better energy estimate to improve the current result.}


\section*{Acknowledgments}
 Z. Li was   supported by   973 Program (2012CB215201),  National Nature Science Foundation of China (11401135), and the Fundamental Research Funds for the Central Universities (HIT.BRETIII.201501 and HIT.PIRS.201610).  Y.-P. Choi was partially supported by Basic Science Research Program through the National Research Foundation of Korea (NRF) funded by the Ministry of Education, Science and Technology (2012R1A6A3A03039496), Engineering and Physical Sciences Research Council (EP/K00804/1), and ERC-Starting grant HDSPCONTR ``High-Dimensional Sparse Optimal Control''. Y.-P. Choi is also supported by the Alexander Humboldt Foundation through the Humboldt Research Fellowship for Postdoctoral Researchers.

\end{document}